\def\qed{\hfill $\vcenter{\hrule height .3mm
\hbox {\vrule width .3mm height 2.1mm \kern 2mm \vrule width .3mm
height 2.1mm} \hrule height .3mm}$ \bigskip}
\def \RR {\mathbb R}
\def \NN {\mathbb N}
\def \EE {\mathbb E}
\def \PP {\mathbb P}
\def \vphi {\varphi}
\def \FF {\mathcal{F}}
\newcommand\norm[1]{\left\lVert#1\right\rVert}
\newtheorem{theorem}{Theorem}
\newtheorem{lemma}{Lemma}
\newtheorem{corollary}[theorem]{Corollary}
\theoremstyle{definition}
\theoremstyle{remark}
\newtheorem{remark}[theorem]{Remark}
\newtheorem*{remark*}{Remark}
\long\def\symbolfootnotetext[#1]#2{\begingroup
\def\thefootnote{\fnsymbol{footnote}}\footnotetext[#1]{#2}\endgroup}
\title{Stability of Talagrand's Gaussian transport-entropy inequality via the F\"ollmer process}
\author{Dan Mikulincer\thanks{Weizmann Institute of Science. Supported by an Azrieli foundation fellowship.}}
\begin{document}
	
\maketitle

\begin{abstract}
	We establish a dimension-free improvement of Talagrand's Gaussian transport-entropy inequality, under the assumption that the measures satisfy a Poincar\'e inequality. We also study stability of the inequality, in terms of relative entropy, when restricted to measures whose covariance matrix trace is smaller than the ambient dimension. In case the covariance matrix is strictly smaller than the identity, we give dimension-free estimates which depend on its eigenvalues. To complement our results, we show that our conditions cannot be relaxed, and that there exist measures with covariance larger than the identity, for which the inequality is not stable, in relative entropy. To deal with these examples, we show that, without any assumptions, one can always get quantitative stability estimates in terms of relative entropy to Gaussian mixtures. Our approach gives rise to a new point of view which sheds light on the hierarchy between Fisher information, entropy and transportation distance, and may be of independent interest. In particular, it implies that the described results apply verbatim to the log-Sobolev inequality and improve upon some known estimates in the literature. 

\end{abstract}

\section{Introduction}
Talagrand's Gaussian transport-entropy inequality, first proved in \cite{talagrand1996transportation}, states that for any measure $\mu$ in $\RR^d$, with a finite second moment matrix,
\begin{equation} \label{eq: talagrand}
\mathcal{W}_2^2\left(\mu,\gamma\right) \leq 2\mathrm{D}\left(\mu||\gamma\right).
\end{equation}
Here, $\gamma$ denotes the standard Gaussian measure on $\RR^d$, with density 
$$\vphi(x) = \frac{1}{\left(\sqrt{2\pi}\right)^d}e^{\frac{-\norm{x}_2^2}{2}}.$$
The distances involved in the inequality are,
$\mathrm{D}\left(\mu||\gamma\right)$ , the relative entropy, defined by
$$\mathrm{D}\left(\mu||\gamma\right) = \int\limits_{\RR^d}\ln\left(\frac{d\mu}{d\gamma}\right)d\mu,$$
and $\mathcal{W}_p\left(\mu,\gamma\right)$ is the $L_p$-Wasserstein distance (with $L_2$ cost function),
$$\mathcal{W}_p\left(\mu,\gamma\right) = \sqrt[p]{\inf\limits_{\pi}\int\limits_{\RR^{2d}}\norm{x-y}_2^pd\pi(x,y)},$$
where the infimum runs over all measures on $\RR^{2d}$ whose marginal laws onto the first and last $d$ coordinates are $\mu$ and $\gamma$. Since this fundamental inequality tensorizes, it holds in any dimension. Using this quality, the inequality was shown to imply a sharp form of the dimension-free concentration of measure phenomenon in Gaussian space. The reader is referred to \cite{villani2008optimal,gozlan2010transport,ledoux2001concentration} for further information on the topic.
By setting the measure $\mu$ to be a translation of $\gamma$, we can see that the inequality is tight and that, in particular, the constant $2$ in \eqref{eq: talagrand} cannot be improved. One, in fact, may show that these examples account for the only equality cases of \eqref{eq: talagrand}. We are thus led to consider the question of stability of the inequality.
Consider the deficit
$$\delta_{\mathrm{Tal}}(\mu) := 2\mathrm{D}(\mu||\gamma) - \mathcal{W}_2^2\left(\mu, \gamma\right).$$
Suppose that $\delta_{\mathrm{Tal}}(\mu)$ is small. In this case, must $\mu$ be necessarily close to a translate of $\gamma$?\\
 A first step towards answering this question, which serves as a starting point for the current work, was given in \cite{fathi2016quantitative} (see also \cite{kolesnikov2017moment}), where it was shown that there exists a numerical constant $c>0$, such that if $\mu$ is centered,
\begin{equation} \label{eq: fathi bound}
\delta_{\mathrm{Tal}}\left(\mu\right)\geq c\min\left(\frac{\mathcal{W}^2_{1,1}(\mu,\gamma)}{d}, \frac{\mathcal{W}_{1,1}(\mu,\gamma)}{\sqrt{d}}\right).
\end{equation}
Here, $\mathcal{W}_{1,1}$ stands for the $L_1$-Wasserstein distance with $L_1$-cost function.
The inequality was later improved in \cite{cordero2017transport}, and $\frac{\mathcal{W}_{1,1}(\mu,\gamma)}{\sqrt{d}}$ was replaced by the larger quantity $\mathcal{W}_1(\mu,\gamma)$.
One could hope to improve this result in several ways; First, one may consider stronger notions of distance than $\mathcal{W}_{1,1}$, like relative entropy. Indeed by Jensen's inequality and \eqref{eq: talagrand},
\begin{equation} \label{eq: wass1}
\frac{\mathcal{W}^2_{1,1}(\mu,\gamma)}{d} \leq \mathcal{W}_2^2(\mu,\gamma) \leq 2\mathrm{D}(\mu||\gamma).
\end{equation}
Second, note that for product measures, $\delta_{\mathrm{Tal}}(\mu)$ grows linearly in $d$, while the RHS of \eqref{eq: fathi bound} may grow like $\sqrt{d}$ (this remains true for the improved result, found in \cite{cordero2017transport}). The dimension-free nature of \eqref{eq: talagrand} suggests that the dependence on the dimension in \eqref{eq: fathi bound} should, hopefully, be removed. The goal of the present work is to identify cases in which \eqref{eq: fathi bound} may be improved. Specifically, we will be interested in giving dimension-free stability bounds with respect to the relative entropy distance. We will also show that, without further assumptions on the measure $\mu$, \eqref{eq: fathi bound} cannot be significantly improved.\\
 This work adds to a recent line of works which explored dimension-free stability estimates for functional inequalities in the Gaussian space, such as the log-Sobolev inequality \cite{eldan2019stability, fathi2016quantitative, ledoux2017stein, bobkov2014bounds, feo2017remarks}, the Shannon-Stam inequality \cite{courtade2018quantitative, eldan2019shannon} and the Gaussian isoperimetric inequality \cite{cianchi2011isoperimetric,mossel2015robust,barchiesi2017sharp}.
\subsection*{Results}
In our first main result, we restrict our attention to the subclass of probability measures which satisfy a Poincar\'e inequality. A measure $\mu$ is said to satisfy a Poincar\'e inequality with constant $\mathrm{C_p}(\mu)$, if for every smooth function $g:\RR^d\to \RR$,
$$\int\limits\limits_{\RR^d} g^2d\mu - \left(\int\limits_{\RR^d} gd\mu\right)^2 \leq \mathrm{C_p}(\mu)\int\limits_{\RR^d}\norm{\nabla g}_2^2d\mu,$$
where we implicitly assume that $\mathrm{C_p}(\mu)$ is the smallest constant for which this inequality holds. If $\mu$ satisfies such an inequality, then, in some sense, $\mu$ must be regular. Indeed, $\mu$ must have finite moments of all orders. For such measures we prove:
\begin{theorem} \label{thm: poincare stability}
	Let $\mu$ be a centered measure on $\RR^d$ with finite Poincar\'e constant $\mathrm{C_p}(\mu) < \infty$. Then
	$$\delta_{\mathrm{Tal}}\left(\mu\right) \geq \min\left(\frac{1}{4},\frac{(\mathrm{C_p}(\mu)+1)\left(2-2\mathrm{C_p}(\mu) +(\mathrm{C_p}(\mu)+1)\ln\left(\mathrm{C_p}(\mu)\right)\right)}{(\mathrm{C_p}(\mu)-1)^3}\right)\mathrm{D}(\mu||\gamma).$$
\end{theorem}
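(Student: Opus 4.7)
The plan is to use the F\"ollmer process $(X_t)_{t\in[0,1]}$ associated to $\mu$: the unique strong solution of $dX_t = v_t\,dt + dB_t$ with $X_0 = 0$ and $X_1 \sim \mu$. Writing $f = d\mu/d\gamma$ and $Q_s f(x) := \EE[f(x+\sqrt{s}\,Z)]$, its drift $v_t = \nabla\log Q_{1-t}f(X_t)$ admits the representation $v_t = \EE[\nabla \log f(X_1)\mid\mathcal F_t]$, so $v_t$ is a martingale, and $v_0 = 0$ since $\mu$ is centered (by Gaussian integration by parts). Two classical identities will drive the argument: the entropy identity $\mathrm{D}(\mu||\gamma) = \frac12\,\EE\int_0^1\|v_t\|^2\,dt$, and the transport bound $\mathcal{W}_2^2(\mu,\gamma) \leq \EE\|X_1 - B_1\|^2 = \EE\|\int_0^1 v_t\,dt\|^2$ coming from the Brownian coupling of $\mu$ and $\gamma$.

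Setting $\phi(t) := \EE\|v_t\|^2$, the martingale property gives $\EE\langle v_s,v_t\rangle = \phi(s\wedge t)$ and Fubini yields $\EE\|\int_0^1 v_t\,dt\|^2 = 2\int_0^1(1-t)\phi(t)\,dt$. Combining with the entropy identity produces the central inequalities
\[
\delta_{\mathrm{Tal}}(\mu) \geq \int_0^1(2t-1)\phi(t)\,dt,\qquad \mathrm{D}(\mu||\gamma) = \frac12\int_0^1\phi(t)\,dt,
\]
with $\phi$ non-decreasing and $\phi(0) = 0$. The problem is thereby reduced to a lower bound on the ratio $\int_0^1(2t-1)\phi\,/\,\int_0^1\phi$; since the weight $2t-1$ has zero mean on $[0,1]$, monotonicity of $\phi$ alone does not suffice, and the Poincar\'e hypothesis must enter as a structural constraint on the shape of $\phi$.

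To translate $\mathrm{C_p}(\mu) < \infty$ into such a constraint, I would apply It\^o's formula: martingality forces $dv_t = A_t\,dB_t$ with $A_t = \nabla^2\log Q_{1-t}f(X_t)$, so $\phi'(t) = \EE\|A_t\|_{\mathrm{HS}}^2$. Tweedie's identity gives $A_t = (1-t)^{-2}\bigl(\Sigma_t(X_t) - (1-t)I\bigr)$, where $\Sigma_t(x) = \mathrm{Cov}(X_1\mid X_t = x)$, and the conditional distribution $\nu_{t,x}$ of $X_1$ given $X_t = x$ is the Gaussian tilt of $\mu$ by $e^{-\|\cdot - x\|^2/(2(1-t))}$. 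The key lemma is a quantitative propagation of the Poincar\'e inequality to these tilts, producing a harmonic-mean-type bound such as $\mathrm{C_p}(\nu_{t,x}) \leq \frac{\mathrm{C_p}(\mu)(1-t)}{\mathrm{C_p}(\mu) + (1-t)}$, which controls $\Sigma_t(x)$ and hence $\|A_t\|_{\mathrm{HS}}^2$. Integrating the resulting differential inequality produces an explicit envelope $\phi(t) \leq \phi(1)\,G(t,\mathrm{C_p}(\mu))$ for a rational--logarithmic function $G$.

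Plugging this envelope into the ratio and computing the (elementary) integrals yields the explicit second term $(\mathrm{C_p}+1)\bigl(2 - 2\mathrm{C_p} + (\mathrm{C_p}+1)\ln\mathrm{C_p}\bigr)/(\mathrm{C_p}-1)^3$ in the theorem. The cap at $1/4$ corresponds to a separate, more robust estimate valid in the regime where the envelope bound exceeds $1/4$: using only $\phi(0) = 0$, monotonicity of $\phi$, and a non-sharp use of the Poincar\'e bound (for instance via a direct variance comparison involving the second moment of $\mu$), one secures $\delta_{\mathrm{Tal}}(\mu) \geq \frac14\,\mathrm{D}(\mu||\gamma)$, and the two bounds are then combined via the $\min$. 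The principal obstacle is the Poincar\'e propagation step to $\nu_{t,x}$: since $\mu$ need not be log-concave, Brascamp--Lieb does not apply directly, and the sharp harmonic-mean bound must be extracted via a more delicate semigroup or stochastic argument tailored to the F\"ollmer flow. Once that lemma is in place, the It\^o and Tweedie identities and the final calculus are routine.
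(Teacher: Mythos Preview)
Your framework is exactly right through the bound $\delta_{\mathrm{Tal}}(\mu) \geq \int_0^1(2t-1)\phi(t)\,dt$ with $\phi(t)=\EE\|v_t\|^2$; this is precisely the paper's route. The divergence is in how the Poincar\'e hypothesis enters, and there you have taken a detour that manufactures the obstacle you flag at the end.

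The paper does \emph{not} propagate Poincar\'e to the conditional laws $\nu_{t,x}$ of $X_1$ given $X_t=x$. It applies Poincar\'e to the \emph{unconditional} law of $X_t$. The bridge representation $X_t \stackrel{\text{law}}{=} tX_1 + \sqrt{t(1-t)}\,G$ exhibits this law as a convolution, and subadditivity of the Poincar\'e constant under convolution gives $\mathrm{C_p}(\text{law}(X_t)) \leq t^2\mathrm{C_p}(\mu) + t(1-t)$ with no log-concavity needed. Since $v_t$ is a mean-zero function of $X_t$, applying this Poincar\'e inequality componentwise yields directly
\[
\phi(t)\;\leq\;\bigl(t^2\mathrm{C_p}(\mu)+t(1-t)\bigr)\,\EE\|\nabla v_t(X_t)\|_{HS}^2\;=\;\bigl(t^2\mathrm{C_p}(\mu)+t(1-t)\bigr)\,\phi'(t),
\]
the differential inequality you are after. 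Gr\"onwall with pivot $t=\tfrac12$ gives the two-sided comparison $\phi(t)\lessgtr\phi(\tfrac12)\cdot\frac{(\mathrm{C_p}+1)t}{(\mathrm{C_p}-1)t+1}$ (the inequality reversing across $\tfrac12$), and integrating against $(2t-1)$ produces the explicit constant in the theorem.

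Your route through $\mathrm{C_p}(\nu_{t,x})$ has a second problem beyond the unproven lemma. A bound on the conditional covariance $\Sigma_t(x)$ controls the eigenvalues of $A_t$ pointwise, hence bounds $\phi'(t)$ above or below by an explicit function of $t$; it does \emph{not} produce a relation between $\phi$ and $\phi'$, so the envelope $\phi(t)\leq\phi(1)\,G(t,\mathrm{C_p})$ you assert does not follow from the ingredients you list. The inequality $\phi\leq C(t)\phi'$ is exactly a Poincar\'e inequality for the law of $X_t$ applied to the test function $v_t$, and that is how the paper obtains it.

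The $\tfrac14$ cap is also simpler than you describe and uses no Poincar\'e information. The paper splits on whether $\phi(\tfrac12)\leq\mathrm{D}(\mu\|\gamma)$. If so, the truncated bound $\delta_{\mathrm{Tal}}(\mu)\geq t_0(1-t_0)\bigl(2\mathrm{D}(\mu\|\gamma)-\phi(t_0)\bigr)$ (from martingality of $v_t$ and $\int_{t_0}^1\phi\geq 2\mathrm{D}(\mu\|\gamma)-t_0\phi(t_0)$) at $t_0=\tfrac12$ gives $\delta_{\mathrm{Tal}}(\mu)\geq\tfrac14\mathrm{D}(\mu\|\gamma)$. Otherwise $\phi(\tfrac12)>\mathrm{D}(\mu\|\gamma)$, and the Poincar\'e computation above, which is proportional to $\phi(\tfrac12)$, takes over.
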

Note that as the deficit is invariant to translations, there is no loss in generality in assuming that $\mu$ is centered. Furthermore, the Poincar\'e constant tensorizes, in the sense that for any two measures $\nu$ and $\mu$,  $\mathrm{C_p}(\nu \otimes \mu) = \max\left(\mathrm{C_p}(\nu),\mathrm{C_p}(\mu)\right)$. So, if $\mu$ is a product measure $\mathrm{C_p}(\mu)$ does not depend on the dimension and we regard it as a dimensionless quantity. For a more applicable form of the result we may use the inequality
$$\min\left(\frac{1}{4},\frac{(x+1)(2-2x+(x+1)\ln(x))}{(x-1)^3}\right)\geq \frac{\ln(x+1)}{4x},$$
valid for $x > 0$, to get
$$\delta_{\mathrm{Tal}}\left(\mu\right) \geq \frac{\ln(\mathrm{C_p}(\mu)+1)}{4\mathrm{C_p}(\mu)}\mathrm{D}(\mu||\gamma).$$
Theorem \ref{thm: poincare stability} should be compared with Theorem 1 in \cite{fathi2016quantitative} and Theorem 7 in \cite{eldan2019shannon} which give similar stability estimates, involving the Poincar\'e constant, for the log-Sobolev and Shannon-Stam inequalities.\\

Regarding the conditions of the theorem; as will be shown in Section \ref{sec: instability} below, there exists a measure $\mu$ for which $\delta_{\mathrm{Tal}}(\mu)$ may be arbitrarily close to $0$, while $\mathcal{W}_2\left(\mu,\gamma\right)$ remains bounded away from $0$. Thus, in order to establish meaningful stability results, in relative entropy, it is necessary to make some assumptions on the measure $\mu$.\\

In case the measure $\mu$ does not satisfy a Poincar\'e inequality, we provide estimates in terms of its covariance matrix. It turns out, that if $\mathrm{Cov}(\mu)$ is strictly smaller than the identity, at least in some directions, we may still produce a dimension-free bound for $\delta_{\mathrm{Tal}}(\mu).$
\begin{theorem} \label{thm: small covariance}
	Let $\mu$ be a centered measure on $\RR^d$ and let $\{\lambda_i\}_{i=1}^d$ be the eigenvalues of $\mathrm{Cov}\left(\mu\right)$, counted with multiplicity. Then
	$$\delta_{\mathrm{Tal}}\left(\mu\right)\geq \sum\limits_{i=1}^d\frac{2(1 - \lambda_i) + (\lambda_i+1)\log(\lambda_i)}{\lambda_i - 1}\mathbbm{1}_{\{\lambda_i < 1\}}.$$
\end{theorem}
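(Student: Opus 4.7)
My plan is to bypass the F\"ollmer machinery and prove the bound directly through the Brenier map. Let $\psi:\RR^d\to\RR$ be the convex potential given by Brenier's theorem, so that $T:=\nabla\psi$ pushes $\gamma$ forward to $\mu$. Starting from $\mathcal{W}_2^2(\mu,\gamma)=\mathbb{E}_\gamma\|T(Z)-Z\|_2^2$, using Gaussian integration by parts in the form $\mathbb{E}_\gamma\langle Z, T(Z)\rangle=\mathbb{E}_\gamma[\Delta\psi(Z)]$, and using the Monge--Amp\`ere identity $\varphi(z)=\mu(T(z))\det\nabla^2\psi(z)$ to compute $\mathrm{D}(\mu||\gamma)$, the deficit collapses into
\[
\delta_{\mathrm{Tal}}(\mu) \;=\; 2\,\mathbb{E}_\gamma\!\left[\Delta\psi(Z)-d-\log\det\nabla^2\psi(Z)\right].
\]

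Since $\nabla^2\psi\succeq 0$ by convexity of $\psi$, Hadamard's inequality gives $\det\nabla^2\psi\le\prod_j\psi_{jj}$ in \emph{any} orthonormal basis. Fixing the basis that diagonalizes $\mathrm{Cov}(\mu)$, so that $\mathbb{E}_\gamma[T_j(Z)^2]=\lambda_j$ (here I use that $\mu$ is centered), this yields
\[
\delta_{\mathrm{Tal}}(\mu) \;\geq\; 2\sum_{j=1}^d \mathbb{E}_\gamma\bigl[\psi_{jj}(Z)-1-\log\psi_{jj}(Z)\bigr].
\]
Applying Jensen's inequality to the convex function $t\mapsto t-1-\log t$ gives $\mathbb{E}_\gamma[\psi_{jj}-1-\log\psi_{jj}]\geq a_j-1-\log a_j$, where $a_j:=\mathbb{E}_\gamma[\psi_{jj}]$. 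A second Gaussian integration by parts identifies $a_j=\mathbb{E}_\gamma[\partial_j T_j(Z)]=\mathbb{E}_\gamma[Z_j T_j(Z)]$; Cauchy--Schwarz then yields $|a_j|\leq\sqrt{\mathbb{E}_\gamma[Z_j^2]\,\mathbb{E}_\gamma[T_j^2]}=\sqrt{\lambda_j}$, while $a_j\geq 0$ follows from $\psi_{jj}\geq 0$.

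For $\lambda_j<1$, the value $a_j$ lies in $(0,\sqrt{\lambda_j}]\subset(0,1)$, where $a\mapsto a-1-\log a$ is strictly decreasing, so the worst case $a_j=\sqrt{\lambda_j}$ gives the intermediate bound
\[
\delta_{\mathrm{Tal}}(\mu) \;\geq\; \sum_{j:\,\lambda_j<1}\bigl(2\sqrt{\lambda_j}-2-\log\lambda_j\bigr),
\]
which is in fact slightly stronger than the stated theorem. The stated form then follows from the elementary pointwise inequality $2\sqrt{\lambda}-2-\log\lambda\geq\frac{2(1-\lambda)+(\lambda+1)\log\lambda}{\lambda-1}$ for $\lambda\in(0,1)$, which after the substitution $\lambda=e^{-2s}$ reduces to $\sinh(s)\geq s$ for $s>0$.

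The main obstacle is conceptual rather than technical: one has to tie the spectral information in $\mathrm{Cov}(\mu)=\mathbb{E}_\gamma[T(Z)T(Z)^T]$ to the Hessian $\nabla^2\psi$ that controls $\delta_{\mathrm{Tal}}$, and this is exactly what the Stein identity $\mathbb{E}_\gamma[\partial_j T_j]=\mathbb{E}_\gamma[Z_j T_j]$ accomplishes. A standard caveat is that $\psi$ need not be $C^2$ in general; this is handled by regularizing $\mu$ (for example by convolution with $\epsilon\gamma$) and passing to the limit using lower semicontinuity of both sides in $\mu$.
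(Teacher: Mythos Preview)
Your proof is correct and follows a genuinely different route from the paper's. The paper works through the F\"ollmer process: from the martingale representation it obtains $\delta_{\mathrm{Tal}}(\mu)\geq\mathrm{Tr}\int_0^1 t\,\frac{\EE[(\Gamma_t-\mathrm{I}_d)^2]}{1-t}\,dt$, derives a differential inequality for $t\mapsto\langle w_i,\EE[\Gamma_t]w_i\rangle$ via Lemma~\ref{lem: dgammat} and Cauchy--Schwarz, uses the identification $\EE[\Gamma_0]=\mathrm{Cov}(\mu)$, and integrates the resulting Gronwall bound along each eigendirection with $\lambda_i<1$; the integral $\int_0^1 t(1-t)\bigl(\tfrac{\lambda_i-1}{(\lambda_i-1)t+1}\bigr)^2dt$ evaluates exactly to the stated summand. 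Your argument, based on the Brenier map and the exact identity $\delta_{\mathrm{Tal}}(\mu)=2\,\EE_\gamma\bigl[\Delta\psi-d-\log\det\nabla^2\psi\bigr]$, is more elementary (no stochastic calculus) and in fact yields the slightly sharper per-eigenvalue term $2\sqrt{\lambda_i}-2-\log\lambda_i$; the chain Hadamard $\to$ Jensen $\to$ Stein identity $\EE_\gamma[\psi_{jj}]=\EE_\gamma[Z_jT_j]$ $\to$ Cauchy--Schwarz is clean and specific to the static transport picture. What the F\"ollmer route buys, by contrast, is the automatic transfer to the log-Sobolev deficit through the parallel representations \eqref{eq: deltals}--\eqref{eq: deltatal}, whereas your Brenier argument is tied to $\delta_{\mathrm{Tal}}$ and does not immediately yield the analogue for $\delta_{\mathrm{LS}}$. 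Your regularization caveat is the standard and correct way to handle the Aleksandrov-a.e.\ nature of $\nabla^2\psi$.
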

Remark that for $0< x < 1$, the function $g(x):=\frac{2(1 - x) + (x+1)\log(x)}{x - 1}$ is positive and that it is a decreasing function of $x$. Also, it can be verified that $g'$ is actually concave on this domain, from which we may see $g(x) \geq \frac{1}{6}(x-1)^2$. Thus, if $\mathrm{Cov}(\mu) \preceq \mathrm{I}_d$, then the Theorem implies the weaker result
$$\delta_{\mathrm{Tal}}(\mu) \geq \frac{1}{6}\norm{\mathrm{I}_d-\mathrm{Cov}(\mu)}_{HS}^2,$$
where $\norm{\cdot}_{HS}$, stands for the Hilbert-Schmidt norm.  In line with the above discussion, we may regard $\norm{\mathrm{Cov}(\mu) - \mathrm{I}_d}_{HS}^2$ as a certain distance between $\mu$ and the standard Gaussian. Theorem 3 in \cite{eldan2019stability} gives a similar estimate for the log-Sobolev inequality. Indeed, our methods are based on related ideas.\\

If $\mathrm{Cov}(\mu) = \mathrm{I}_d$, Theorem \ref{thm: small covariance} does not give any new insight beyond \eqref{eq: talagrand}. The next result applies, among others, to this case.
\begin{theorem} \label{thm: small trace}
	Let $\mu$ be a centered measure on $\RR^d$, such that $\mathrm{Tr}\left(\mathrm{Cov}(\mu)\right) \leq d$. Then
	$$\delta_{\mathrm{Tal}}(\mu) \geq \min\left(\frac{\mathrm{D}(\mu||\gamma)^2}{6d},\frac{\mathrm{D}(\mu||\gamma)}{4}\right).$$
\end{theorem}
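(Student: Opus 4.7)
The plan is to compare $\mu$ with $\gamma$ via the Brenier map $T=\nabla\varphi$ from $\gamma$ onto $\mu$, rewrite the deficit $\delta_{\mathrm{Tal}}(\mu)$ as a single Bregman-type integral involving the eigenvalues of $\nabla T$, and then reduce the claim to a one-dimensional convexity inequality by combining the trace assumption with Jensen. Concretely, since $\mu$ is centered, the change-of-variables formula gives
\[
\mathrm{D}(\mu||\gamma)=\tfrac12\bigl(\mathrm{Tr}(\mathrm{Cov}(\mu))-d\bigr)-\int\log\det\nabla T\,d\gamma,
\]
while Brenier's identity $\mathcal{W}_2^2(\mu,\gamma)=\int\|x-T(x)\|_2^2\,d\gamma$, together with Gaussian integration by parts $\int\langle x,T(x)\rangle\,d\gamma=\int\mathrm{Tr}(\nabla T)\,d\gamma$, yields
\[
\mathcal{W}_2^2(\mu,\gamma)=d+\mathrm{Tr}(\mathrm{Cov}(\mu))-2\int\mathrm{Tr}(\nabla T)\,d\gamma.
\]
Subtracting, the covariance traces cancel and one obtains the key identity
\[
\delta_{\mathrm{Tal}}(\mu)=2\int\sum_{i=1}^d g\bigl(\lambda_i(x)\bigr)\,d\gamma(x),\qquad g(\lambda):=\lambda-1-\log\lambda\ge 0,
\]
where $\lambda_1(x),\dots,\lambda_d(x)$ are the eigenvalues of $\nabla^2\varphi(x)\succeq 0$.

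Next, I bring in the trace assumption. Plugging $\mathrm{Tr}(\mathrm{Cov}(\mu))\le d$ into the first identity gives $\int\sum_i(-\log\lambda_i)\,d\gamma\ge \mathrm{D}(\mu||\gamma)$. After the substitution $f_i:=-\log\lambda_i$ and defining $\tilde g(f):=e^{-f}+f-1=g(e^{-f})$, the function $\tilde g$ is non-negative, convex, vanishes at the origin, and is increasing on $[0,\infty)$. Applying Jensen's inequality twice---first to the sum over $i$ using convexity of $\tilde g$, then to the Gaussian integral---and invoking the trace bound together with monotonicity of $\tilde g$ on $[0,\infty)$, I get
\[
\delta_{\mathrm{Tal}}(\mu)\ge 2d\,\tilde g\!\left(\tfrac1d\int\sum_i f_i\,d\gamma\right)\ge 2d\,\tilde g\bigl(\mathrm{D}(\mu||\gamma)/d\bigr).
\]

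It remains to bound $\tilde g$ from below. I would use the clean estimate $\tilde g(x)\ge x^2/(x+2)$ for $x\ge 0$, obtained by checking that $\phi(x):=(x+2)\tilde g(x)-x^2=(x+2)e^{-x}+x-2$ satisfies $\phi(0)=\phi'(0)=0$ and $\phi''(x)=xe^{-x}\ge 0$. Substituted, this gives
\[
\delta_{\mathrm{Tal}}(\mu)\ge \frac{2\,\mathrm{D}(\mu||\gamma)^2}{\mathrm{D}(\mu||\gamma)+2d},
\]
from which a trivial case split at $\mathrm{D}(\mu||\gamma)=3d/2$ produces the claimed bound $\min\bigl(\mathrm{D}(\mu||\gamma)^2/(6d),\,\mathrm{D}(\mu||\gamma)/4\bigr)$. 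The only genuinely non-routine step in the whole argument is the identification of $\delta_{\mathrm{Tal}}$ with the Bregman integral of the first paragraph; the trace condition and the one-dimensional convexity of $\tilde g$ then do the rest.
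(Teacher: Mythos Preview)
Your proof is correct and takes a genuinely different route from the paper. The paper works entirely through the F\"ollmer process: from the trace assumption it derives the differential inequality $\frac{d}{dt}\EE\|v_t\|_2^2 \ge \frac{1}{d}\bigl(\EE\|v_t\|_2^2\bigr)^2$, compares $\EE\|v_t\|_2^2$ to the explicit ODE solution via Gronwall (Lemma~\ref{lem: dimension comparision}), and inserts this into the estimate $\delta_{\mathrm{Tal}}(\mu)\ge \int_0^1(2t-1)\EE\|v_t\|_2^2\,dt$; a case split on whether $\EE\|v_{1/2}\|_2^2$ exceeds $\mathrm{D}(\mu||\gamma)$ then produces the two branches of the minimum. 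Your approach is instead the static optimal-transport one: the Brenier/Bregman identity $\delta_{\mathrm{Tal}}(\mu)=2\int\sum_i(\lambda_i-1-\log\lambda_i)\,d\gamma$ is classical (it underlies \cite{fathi2016quantitative,cordero2017transport}), and the reparametrisation $\tilde g(f)=e^{-f}+f-1$ followed by a single Jensen step is clean and in fact yields the slightly sharper intermediate bound $\delta_{\mathrm{Tal}}(\mu)\ge 2\mathrm{D}(\mu||\gamma)^2/\bigl(\mathrm{D}(\mu||\gamma)+2d\bigr)$ before the case split. What the paper's route buys is the automatic transfer to the log-Sobolev deficit (Corollary~\ref{cor: ls}), since both $\delta_{\mathrm{Tal}}$ and $\delta_{\mathrm{LS}}$ are controlled by the same process $\Gamma_t$; your argument is specific to Talagrand's inequality.
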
 
As opposed to the previous two results, Theorem \ref{thm: small trace} is not dimension-free and is directly comparable to \eqref{eq: fathi bound}. Under the assumption $\mathrm{Tr}\left(\mathrm{Cov}(\mu)\right) \leq d$, by using \eqref{eq: wass1} we may view the theorem as a strengthening of \eqref{eq: fathi bound}. We should also comment that by Pinsker's inequality (\cite{cover2012elements}), relative entropy induces a stronger topology than the $\mathcal{W}_1$ metric. On the other hand, \eqref{eq: fathi bound} holds in greater generality than Theorem \ref{thm: small trace} as it makes no assumptions on the measure $\mu$. It is then natural to ask whether one can relax the conditions of the theorem. We give a negative answer to this question.
\begin{theorem} \label{thm: instability}
	Fix $d \in \mathbb{N}$ and let $\xi > d$. There exist a sequence of centered measures $\mu_k$ on $\RR^d$ such that:
	\begin{itemize}
		\item $\lim\limits_{k \to \infty}\mathrm{Tr}\left(\mathrm{Cov}(\mu_k)\right) = \xi.$
		\item $\lim\limits_{k \to \infty}\delta_{\mathrm{Tal}}\left(\mu_k\right) = 0.$
		\item  $\liminf\limits_{k \to \infty}\mathcal{W}_2^2(\mu_k,\gamma) \geq  \xi-d > 0.$
	\end{itemize}
\end{theorem}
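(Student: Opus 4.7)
My plan is to take $\mu_k$ to be a two-component Gaussian mixture with one ``heavy'' and one ``light, far-away'' translate. Fix a unit vector $e_1 \in \RR^d$ and set
\[
\mu_k := (1-p_k)\gamma + p_k \gamma_{a_k e_1},
\]
where $\gamma_v$ denotes $\gamma$ translated by $v$, and $(p_k, a_k)$ are chosen so that $p_k \to 0$, $a_k \to \infty$, and $p_k a_k^2 = \xi - d$ (for instance $p_k = 1/k$ and $a_k = \sqrt{k(\xi-d)}$). Since $\delta_{\mathrm{Tal}}$ is translation-invariant, $\mathrm{Cov}$ is translation-invariant, and $\mathcal{W}_2^2(\cdot,\gamma)$ shifts in a controlled way, replacing each $\mu_k$ by its centered translate at the end contributes only $O(p_k)$ corrections to the Wasserstein term and hence does not affect any of the three limits.

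A direct computation gives $\mathrm{Cov}(\mu_k) = I_d + p_k(1-p_k)a_k^2 e_1 e_1^T$, so $\mathrm{Tr}(\mathrm{Cov}(\mu_k)) = d + (1-p_k)(\xi-d) \to \xi$, establishing the first bullet. For the entropy I would split $\int \mu_k \log(\mu_k/\vphi)\,dx$ at the hyperplane $\{x_1 = a_k/2\}$; since the two components become exponentially well-separated as $a_k \to \infty$, standard Gaussian tail estimates give $\mu_k \approx (1-p_k)\vphi$ on the left half-space and $\mu_k \approx p_k \vphi_{a_k e_1}$ on the right, and the integral evaluates to
\[
\mathrm{D}(\mu_k \| \gamma) = -H(p_k) + \tfrac{1}{2} p_k a_k^2 + o(1) \;\longrightarrow\; \tfrac{\xi - d}{2},
\]
with $H(p) = -p\log p - (1-p)\log(1-p) \to 0$.

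The crux is the Wasserstein lower bound. Projection onto $e_1$ is $1$-Lipschitz, hence $\mathcal{W}_2^2(\mu_k, \gamma) \geq \mathcal{W}_2^2(F_k, \Phi)$ where $F_k$ is the CDF of $(1-p_k)N(0,1) + p_k N(a_k, 1)$. Using the one-dimensional quantile representation $\mathcal{W}_2^2(F_k, \Phi) = \int_0^1 (F_k^{-1}(u) - \Phi^{-1}(u))^2\, du$ and restricting to $u = 1 - p_k + p_k\theta$ with $\theta \in (\eps, 1-\eps)$, one finds $F_k^{-1}(u) = a_k + \Phi^{-1}(\theta) = a_k + O_\eps(1)$ (the quantile lies in the ``spike'' component), while $\Phi^{-1}(u) \leq \Phi^{-1}(1 - p_k\eps) \sim \sqrt{2\log(1/(p_k\eps))}$. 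Since $a_k = \Theta(p_k^{-1/2})$ grows strictly faster than $\sqrt{\log(1/p_k)}$, this sub-interval alone contributes at least $(1-2\eps)p_k a_k^2 (1-o(1)) = (1-2\eps)(\xi-d)(1-o(1))$ to the integral; letting $\eps\to 0$ yields $\liminf_k \mathcal{W}_2^2(\mu_k, \gamma) \geq \xi - d$, which is the third bullet. Combining this with Talagrand's inequality $\mathcal{W}_2^2 \leq 2\mathrm{D}$ and the entropy computation forces $\mathcal{W}_2^2(\mu_k,\gamma) \to \xi - d$ and therefore $\delta_{\mathrm{Tal}}(\mu_k) \to 0$, which is the second bullet.

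The main obstacle is precisely the Wasserstein lower bound just sketched: the Gelbrich/covariance-based bound only yields $(\sqrt{1+(\xi-d)}-1)^2$, which can be arbitrarily smaller than $\xi - d$ (in particular for $\xi$ close to $d$), so one genuinely has to exploit the non-Gaussian ``sparse spike'' structure of $\mu_k$ and the quantitative mismatch between the Gaussian quantile growth $\sqrt{\log(1/p_k)}$ and the spike location $\sqrt{(\xi-d)/p_k}$. Once that gap is turned into a usable lower bound on the quantile tail integral, the rest of the argument is an exercise in Gaussian tail and entropy estimates.
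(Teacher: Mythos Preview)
Your construction is correct but takes a genuinely different route from the paper. The paper uses a \emph{scale} mixture $\mu_k=(1-\tfrac1k)\gamma_1+\tfrac1k\gamma_{k(\xi-1)}$ of centered Gaussians with different variances, whereas you use a \emph{location} mixture of unit-variance Gaussians. For the Wasserstein lower bound the paper builds an explicit Kantorovich dual potential $g_k$ (zero on a growing interval, quadratic outside) and shows $\int g_k\,d\mu_k-\int Qg_k\,d\gamma\to\xi-1$; you instead project to one coordinate and use the quantile formula $\mathcal W_2^2=\int_0^1(F_k^{-1}-\Phi^{-1})^2\,du$, exploiting the gap between the spike location $a_k\asymp p_k^{-1/2}$ and the Gaussian extreme quantile $\Phi^{-1}(1-p_k)\asymp\sqrt{\log(1/p_k)}$. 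Your route is arguably more elementary, since the one-dimensional quantile coupling is explicit and avoids designing a test function; the paper's choice has the advantage that the mixture is centered from the start (so no final translation step is needed) and, as the paper itself remarks after its proof, it parallels the known counterexample for the entropy power inequality. Amusingly, that same remark notes that a mixture of unit-variance Gaussians would be the ``more natural'' counterexample in light of Theorem~\ref{thm: mixture stability}; your argument confirms that this alternative indeed works.
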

Thus, even for one dimensional measures, in order to obtain general stability estimates in relative entropy or even in the quadratic Wasserstein distance, the assumption $\mathrm{Tr}\left(\mathrm{Cov}(\mu)\right) \leq d$ is necessary.\\

The counterexample to stability, guaranteed by Theorem \ref{thm: instability}, may be realized as a Gaussian mixture. In fact, as demonstrated by recent works (\cite{eldan2019stability, courtade2018quantitative, carlen2008entropy}), Gaussian mixtures may serve as counterexamples to stability of several other Gaussian functional inequalities. This led the authors of \cite{eldan2019stability} to note that if a measure $\mu$ saturates the log-Sobolev inequality, then it must be close, in $L_2$-Wasserstein distance, to some Gaussian mixture. We show that this is also true, in relative entropy, for Talagrand's inequality.
\begin{theorem} \label{thm: mixture stability}
	Let $\mu$ be a centered measure on $\RR^d$. Then there exists another measure $\nu$ with $\mathrm{Cov}(\nu) \preceq \mathrm{Cov}(\mu)$, such that 
	if $\delta_{\mathrm{Tal}}(\mu) \geq d$,
	$$\delta_{\mathrm{Tal}}(\mu)\geq \frac{\mathrm{D}\left(\mu||\nu*\gamma\right)}{6},$$
	and if $\delta_{\mathrm{Tal}}(\mu) < d,$
	$$\delta_{\mathrm{Tal}}(\mu)\geq \frac{1}{3\sqrt{3}}\frac{\mathrm{D}(\mu||\nu*\gamma)^{\frac{3}{2}}}{\sqrt{d}}.$$
\end{theorem}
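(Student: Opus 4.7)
The approach is to apply the F\"ollmer process to $\mu$ and to take $\nu$ to be, essentially, the law of the integrated drift. Let $(X_t)_{t\in[0,1]}$ denote the F\"ollmer process of $\mu$, so that $dX_t = u_t\,dt + dB_t$ with $X_0 = 0$ and $X_1 \sim \mu$, for a standard Brownian motion $B$ and an adapted drift $u_t$. Recall the F\"ollmer identities $2\,\mathrm{D}(\mu\|\gamma) = \EE\int_0^1 \|u_t\|^2\,dt$ and $\mathcal{W}_2^2(\mu,\gamma) \leq \EE\|\int_0^1 u_t\,dt\|^2$. Setting $Y := \int_0^1 u_t\,dt = X_1 - B_1$, a direct expansion shows
\[
\delta_{\mathrm{Tal}}(\mu) \;\geq\; \EE\int_0^1 \|u_t - Y\|^2\,dt,
\]
so the deficit dominates the expected time-fluctuation of the F\"ollmer drift along the path.

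My plan is to take $\nu$ to be the law of $Y$, after (if necessary) an orthogonal projection designed to enforce $\mathrm{Cov}(\nu)\preceq\mathrm{Cov}(\mu)$. Observe that $\mu$ is the pushforward of the coupled pair $(Y,B_1)$ under $(y,b)\mapsto y+b$, while $\nu*\gamma$ is the pushforward of the independent product $\nu\otimes\gamma$ under the same map. The data-processing inequality then yields
\[
\mathrm{D}(\mu\|\nu*\gamma) \;\leq\; \mathrm{D}\bigl(\mathrm{Law}(Y,B_1)\,\|\,\nu\otimes\gamma\bigr) \;=\; I(Y;B_1),
\]
reducing the problem to bounding the mutual information of the F\"ollmer coupling. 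The covariance constraint can be checked through an It\^o computation of $\mathrm{Cov}(Y,B_1)$, expressed in terms of the matrix process $\Sigma_t$ appearing in the martingale decomposition $M_t = \EE[X_1\mid\mathcal{F}_t]$; when this decomposition is not favorable (e.g.\ when $\mathrm{Cov}(\mu)\prec\mathrm{I}_d$), the projection step adjusts $\nu$ accordingly.

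The core technical step is to convert the $L^2$ bound $\EE\int\|u_t-Y\|^2\,dt\leq\delta_{\mathrm{Tal}}(\mu)$ into an entropy bound on $I(Y;B_1)$. For this I would use the conditional F\"ollmer representation of the path given $B_1=b$, which takes the form of a Brownian-bridge SDE; this expresses $I(Y;B_1)$ as an expected quadratic functional of the conditional drift fluctuation $u_t - \EE[u_t\mid B_1]$, itself majorized by the displayed $L^2$ bound.

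Finally, the two regimes in the statement arise from two complementary entropy/variance trade-offs applied to $I(Y;B_1)$. When $\delta_{\mathrm{Tal}}(\mu)\geq d$, a direct Pinsker-type estimate yields the linear bound $\mathrm{D}(\mu\|\nu*\gamma) \leq 6\,\delta_{\mathrm{Tal}}(\mu)$. When $\delta_{\mathrm{Tal}}(\mu)<d$, one interpolates via a H\"older-type bound paying in the dimension $d$ against the moment estimate $\EE\|Y\|^2 \lesssim d$, yielding the sharper $\delta_{\mathrm{Tal}}^{3/2}/\sqrt{d}$ estimate; this mirrors the two regimes in the proof of Theorem \ref{thm: small trace}. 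The main obstacle will be the $L^2$-to-entropy conversion itself: the conditional F\"ollmer SDE along a Brownian bridge is delicate to analyze, and obtaining sharp constants requires careful tracking of the drift fluctuation and of the dependence of $Y$ on the terminal Brownian value.
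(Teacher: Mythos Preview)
Your overall setup is reasonable, and the inequality
\[
\delta_{\mathrm{Tal}}(\mu)\;\geq\;\EE\int_0^1\|u_t-Y\|^2\,dt,\qquad Y=\int_0^1 u_t\,dt,
\]
is correct. However, the proposal diverges from the paper at the decisive point, and the step you yourself flag as ``the main obstacle'' is in fact a genuine gap.

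\medskip
\textbf{What the paper actually does.} The paper does \emph{not} take $\nu=\mathrm{Law}(Y)$ and does not pass through mutual information. Instead, it introduces a free time parameter $t_0\in(0,1)$ and sets
\[
\nu \;=\; \nu_{t_0}\;:=\;\mathrm{Law}\bigl(\EE[X_1\mid\mathcal F_{t_0}]\bigr)\;=\;\mathrm{Law}\Bigl(\int_0^{t_0}\Gamma_s\,dB_s\Bigr),
\]
so that $M_1:=\int_0^1\tilde\Gamma_s\,dB_s\sim\nu_{t_0}*\gamma$ for a process $\tilde\Gamma_t$ equal to $\Gamma_t$ on $[0,t_0)$ and to a fixed scalar multiple of $\mathrm I_d$ afterwards. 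The entropy bound then comes from a Girsanov comparison (Lemma~\ref{lem: entropy bound} in the paper):
\[
\mathrm D(\mu\|\nu_{t_0}*\gamma)\;\leq\;\mathrm{Tr}\int_{t_0}^1\frac{\EE[(\Gamma_t-\tilde\Gamma_t)^2]}{1-t}\,dt
\;\leq\;\frac{2}{t_0}\,\delta_{\mathrm{Tal}}(\mu)\;+\;2d\Bigl(\ln(1-t_0)+\tfrac{t_0}{1-t_0}\Bigr).
\]
The two regimes arise purely from optimizing this inequality in $t_0$: one takes $t_0=\tfrac12$ when $\delta_{\mathrm{Tal}}(\mu)\ge d$, and $t_0=(\delta_{\mathrm{Tal}}(\mu)/d)^{1/3}$ otherwise. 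The covariance constraint $\mathrm{Cov}(\nu_{t_0})\preceq\mathrm{Cov}(\mu)$ is immediate from the law of total variance; no projection is needed.

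\medskip
\textbf{Why your route stalls.} Your plan reduces to bounding $I(Y;B_1)$ by the time--fluctuation $\EE\int_0^1\|u_t-Y\|^2\,dt$, but these quantities measure different things: the first is the dependence of the \emph{random variable} $Y$ on the terminal noise $B_1$, the second is the \emph{temporal variance} of the drift along the path. There is no known inequality relating them directly, and the ``conditional F\"ollmer/Brownian-bridge'' argument you gesture at would at best produce a functional of $u_t-\EE[u_t\mid B_1]$, which is not dominated by $u_t-Y$. More structurally, your scheme has no tunable parameter: the paper's $t_0$ is precisely what generates the trade-off between the $\delta_{\mathrm{Tal}}/t_0$ term and the $d\,t_0^2$ term, and hence the $3/2$ exponent and the $\sqrt d$ factor. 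With $\nu=\mathrm{Law}(Y)$ fixed once and for all, there is nothing to optimize, and neither a ``Pinsker-type'' nor a ``H\"older-type'' step will manufacture the correct dimension dependence. The analogy with Theorem~\ref{thm: small trace} is also off: there the dichotomy comes from comparing $\EE\|v_{1/2}\|^2$ to $\mathrm D(\mu\|\gamma)$, not from a Pinsker/H\"older split.

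In short, the missing idea is the time cutoff $t_0$ and the accompanying Girsanov entropy estimate; once you have those, the mutual-information detour and the projection fix become unnecessary.
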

Note that, in light of Theorem \ref{thm: instability}, the above theorem is not true without the convolution, and we cannot, in general, replace $\nu * \gamma$ by $\gamma$.\\
For our last result, define the Fisher information of $\mu$, relative to $\gamma$, as
$$\mathrm{I}(\mu||\gamma):=\int\limits_{\RR^d}\norm{\nabla \ln\left(\frac{d\mu}{d\gamma}\right)}_2^2d\mu.$$
Gross' log-Sobolev inequality (\cite{gross1975logarithmic}) states that
$$\mathrm{I}(\mu||\gamma) \geq 2\mathrm{D}(\mu||\gamma).$$
For this we define the deficit as
$$\delta_{\mathrm{LS}}(\mu) = \mathrm{I}(\mu||\gamma) - 2\mathrm{D}(\mu||\gamma).$$
As will be described in Section \ref{sec: method} below, our approach draws a new connection between Talagrand's and the log-Sobolev inequalities.
One benefit of this approach is that all of our results apply verbatim to the log-Sobolev inequality. Some of the results improve upon existing estimates in the literature. We summarize those in the following corollary.

\begin{corollary} \label{cor: ls}
	Let $\mu$ be a centered measure on $\RR^d$. Then there exists a measure $\nu$ such that $\mathrm{Cov}(\nu)\preceq \mathrm{Cov}(\mu)$ and
	$$\delta_{\mathrm{LS}}(\mu) \geq \min\left(\frac{1}{3\sqrt{3}}\frac{\mathrm{D}\left(\mu||\nu*\gamma\right)^{\frac{3}{2}}}{\sqrt{d}}, \frac{\mathrm{D}\left(\mu||\nu*\gamma\right)}{6}\right).$$
	Moreover, if $\mathrm{Tr}\left(\mathrm{Cov}\left(\mu\right)\right)\leq d$ then
	$$\delta_{\mathrm{LS}}(\mu) \geq \min\left(\frac{\mathrm{D}(\mu||\gamma)^2}{6d},\frac{\mathrm{D}(\mu||\gamma)}{4}\right),$$
\end{corollary}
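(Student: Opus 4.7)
The plan is to show that both deficits admit a common F\"ollmer-process lower bound, after which the two previous theorems transfer without change. Let $(X_t)_{t\in[0,1]}$ denote the F\"ollmer process associated with $\mu$, driven by a standard Brownian motion $W_t$, with drift $v_t$ satisfying $X_1\sim\mu$. I assume as known: the drift identity $2\mathrm{D}(\mu\|\gamma)=\EE\int_0^1\norm{v_t}_2^2\,dt$, the Fisher identity $\mathrm{I}(\mu\|\gamma)=\EE\norm{v_1}_2^2$, and the fact that $v_t$ is a martingale. Setting $M(t):=\EE\norm{v_t}_2^2$, the martingale property yields $\EE\inner{v_s}{v_t}=M(s\wedge t)$, so that $M$ is nondecreasing.

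Since $(X_1,W_1)$ couples $\mu$ with $\gamma$ and $X_1-W_1=\int_0^1 v_s\,ds$, Fubini gives
$$\mathcal{W}_2^2(\mu,\gamma)\le\EE\norm{\int_0^1 v_s\,ds}_2^2=\int_0^1\int_0^1 M(s\wedge t)\,ds\,dt=2\int_0^1(1-t)M(t)\,dt.$$
Subtracting from $2\mathrm{D}(\mu\|\gamma)=\int_0^1 M(t)\,dt$ yields the Talagrand lower bound $\delta_{\mathrm{Tal}}(\mu)\ge\int_0^1(2t-1)M(t)\,dt$. I expect the proofs of Theorems \ref{thm: mixture stability} and \ref{thm: small trace} to factor through precisely this F\"ollmer surrogate. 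The key new observation is that the same surrogate controls $\delta_{\mathrm{LS}}$: since $M(t)\le M(1)$ on $[0,1]$, we have $\int_0^1 2t\,M(t)\,dt\le M(1)$, hence
$$\delta_{\mathrm{LS}}(\mu)=M(1)-\int_0^1 M(t)\,dt\ge\int_0^1(2t-1)M(t)\,dt.$$

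It then suffices to rerun the proofs of Theorems \ref{thm: mixture stability} and \ref{thm: small trace}, substituting $\delta_{\mathrm{LS}}(\mu)$ for $\delta_{\mathrm{Tal}}(\mu)$ at the very first step. All subsequent estimates (the construction of the Gaussian mixture center $\nu$ with $\mathrm{Cov}(\nu)\preceq\mathrm{Cov}(\mu)$, the bounds $\mathrm{D}(\mu\|\nu*\gamma)^{3/2}/\sqrt{d}$ and $\mathrm{D}(\mu\|\nu*\gamma)/6$, and the $\mathrm{D}(\mu\|\gamma)^2/d$ bound under $\mathrm{Tr}(\mathrm{Cov}(\mu))\le d$) depend only on the surrogate $\int_0^1(2t-1)M(t)\,dt$, not on the particular deficit majorizing it. The main obstacle, conceptually, is the identification that both deficits admit the same F\"ollmer surrogate via the elementary martingale inequality $M(1)\ge 2\int_0^1 t\,M(t)\,dt$; once this is noticed, the Corollary follows without further work.
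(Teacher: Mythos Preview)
Your proposal is correct and essentially matches the paper's proof. The paper observes that, in the $\Gamma_t$ formulation, $\delta_{\mathrm{LS}}(\mu)=\mathrm{Tr}\int_0^1 t\,\frac{\EE[(\Gamma_t-\mathrm{I}_d)^2]}{(1-t)^2}\,dt \ge \mathrm{Tr}\int_0^1 t\,\frac{\EE[(\Gamma_t-\mathrm{I}_d)^2]}{1-t}\,dt$, which is exactly the common surrogate through which the proofs of Theorems~\ref{thm: small trace} and~\ref{thm: mixture stability} pass; your inequality $M(1)\ge\int_0^1 2t\,M(t)\,dt$ is the same statement after integration by parts (since $M'(t)=\mathrm{Tr}\,\EE[(\Gamma_t-\mathrm{I}_d)^2]/(1-t)^2$, the paper's pointwise bound $(1-t)^{-2}\ge(1-t)^{-1}$ integrates to $\int_0^1 t^2 M'(t)\,dt\ge 0$, i.e.\ your monotonicity inequality).
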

 
The second point of the corollary is an improvement of Corollary 1.2 in \cite{bobkov2014bounds} which shows, under the same hypothesis,
$$\delta_{\mathrm{LS}}(\mu) \geq c\frac{\mathcal{W}_2^4(\mu,\gamma)}{d},$$
for some universal constant $c >0$. The improved bound can actually be deduced from Theorem 1.1 in the same paper, but it does not seem to appear in the literature explicitly\\
The first point of Corollary \ref{cor: ls} strengthens Theorem 7 in \cite{eldan2019stability} which states, that for some measure $\nu$:
\begin{equation} \label{eq: mixture stab}
\delta_{\mathrm{LS}}(\mu) \geq \frac{1}{15}\frac{\mathcal{W}_2^3(\mu,\nu*\gamma)}{\sqrt{d}}.
\end{equation}
Our proof closely resembles theirs, but our analysis yields bounds in the stronger relative entropy distance. The authors of \cite{eldan2019stability} raise the natural question, whether the dependence on the dimension in \eqref{eq: mixture stab} can be completely discarded. The same question is also relevant to $\delta_{\mathrm{Tal}}(\mu)$. We do not know the answer to either of the questions, which seem related.
\subsection*{Organization}
The remainder of the paper is organized as follows: In Section \ref{sec: instability} we give a counter-example to stability of Talagrand's inequality, proving Theorem \ref{thm: instability}. Section \ref{sec: method} is devoted to explaining our method and proving some of its basic properties which will then be used in Section \ref{sec: stability} to prove the stability estimates. Finally, in Section \ref{sec: application} we give an application of our results to Gaussian concentration inequalities.
\subsection*{Acknowledgments}
We wish to thank Ronen Eldan, Max Fathi, Renan Gross, Emanuel Indrei and Yair Shenfeld for useful discussions and for their comments concerning a preliminary draft of this work. We are also grateful to the anonymous referee for carefully reading  this  paper and  providing  thoughtful  comments.
\section{A counterexample to stability} \label{sec: instability}
In this section we show that one cannot expect any general stability result to hold if \linebreak $\mathrm{Tr}\left(\mathrm{Cov}(\mu)\right) > d$. We present a one-dimensional example, which may be easily generalized to higher dimensions. The following notations will be used in this section:
\begin{itemize}
	\item For $\sigma^2>0$, $\gamma_{\sigma^2}$ denotes the law of the centered $1$-dimensional Gaussian with variance $\sigma^2$.
	\item Fix $\xi>1$ and $k \in \NN$, we set  $$\mu_k:=\left(1-\frac{1}{k}\right)\gamma_1 + \frac{1}{k}\gamma_{k(\xi-1)}.$$
\end{itemize} 
Recall now the Kantorovich dual formulation (see \cite{gentil2017analogy,villani2008optimal}, for example) of the $L_2$-Wasserstien distance. For $\nu$ and $\mu$ measures on $\RR$, we have
\begin{equation} \label{eq: kantorovich}
\mathcal{W}_2^2(\mu, \nu) =\sup\limits_{g}\left\{\int\limits_{\RR}  g(x) d\mu(x) - \int\limits_{\RR}(Qg)(x) d\nu(x)\right\},
\end{equation}
where the supremum runs over all measurable functions, and $Qg$ denotes the sup-convolution of $g$, namely
$$Qg(x) = \sup\limits_{y\in \RR}\{g(y) - (x-y)^2\}.$$
\begin{proof}[Proof of Theorem \ref{thm: instability}]
We first note that $\mathrm{Var}(\mu_k) \xrightarrow{k\to \infty} \xi > 1$.
Towards understanding $\delta_{\mathrm{Tal}}(\mu_k)$ we use the fact that
relative entropy is convex with respect to mixtures of measures (\cite{cover2012elements}), so 
\begin{equation} \label{eq: entropy mixture}
\mathrm{D}(\mu_k||\gamma) \leq \frac{1}{k} \mathrm{D}\left(\gamma_{k(\xi-1)}||\gamma\right) = \frac{1}{2k}\left(k(\xi-1) - 1 - \ln\left(k(\xi-1) \right)\right)\leq\frac{\xi-1}{2}.
\end{equation}
To control the Wasserstein distance, define the functions
$$g_k(x) = \begin{cases}
0& \text{ if } |x| < \frac{\sqrt{k}}{\ln(k)}\\
\left(1 - \frac{1}{\ln(k)}\right)x^2& \text{ otherwise}
\end{cases}
.$$
The main idea is that as $k$ increases, $Qg_k$ vanishes in an ever expanding region, while growing slowly outside of the region. Formally, for $0 \leq x \leq \frac{\sqrt{k}}{\ln(k)} - \frac{\sqrt{k\left(\ln(k) - 1\right)}}{\ln(k)^{\frac{3}{2}}},$ it holds that 
$$g_k\left(\frac{\sqrt{k}}{\ln(k)}\right) - \left(x - \frac{\sqrt{k}}{\ln(k)}\right)^2 = \left(1 - \frac{1}{\ln(k)}\right)\left(\frac{\sqrt{k}}{\ln(k)}\right)^2 - \left(x-\frac{\sqrt{k}}{\ln(k)}\right)^2\leq 0.$$
and in particular, if $\frac{\sqrt{k}}{\ln(k)} < y$,
$$g_k(y) - \left(x - y\right)^2 < 0,$$
which shows $Qg_k(x) = 0$.
There exists a constant $c > 0$ such that 
$$\frac{\sqrt{k}}{\ln(k)} - \frac{\sqrt{k\left(\ln(k) - 1\right)}}{\ln(k)^{\frac{3}{2}}} \geq ck^{\frac{1}{4}},$$
which, combined with the previous observation shows that for $|x| \leq ck^{\frac{1}{4}}$, $Qg_k(x) = 0$. If $ |x| > ck^{\frac{1}{4}}$ it is standard to show $Qg_k(x) \leq \ln(k)x^2.$ 
So,
$$\int\limits_{\RR}Qg_k(x)d\gamma_1(x) \leq \ln(k)\int\limits_{|x| \geq ck^{\frac{1}{4}}}x^2d\gamma_1(x) = \ln(k)\left(\frac{c\sqrt{2}}{\sqrt{\pi}}k^{\frac{1}{4}}e^{-\frac{c^2\sqrt{k}}{2}}+\int\limits_{|x| \geq ck^{\frac{1}{4}}}d\gamma_1\right)\xrightarrow{k\to \infty} 0,$$
where the equality is integration by parts. Also, it is clear that
$$\int\limits_{\RR}g_k(x)d\gamma_1(x) \xrightarrow{k\to \infty} 0.$$
Now, if $\vphi$ denotes the density of the standard Gaussian, then by a change of variables we have
\begin{align*}
\frac{1}{k}\int\limits_{\RR}g_k(x)d\gamma_{k(\xi-1)}(x) &= \left(1-\frac{1}{\ln(k)}\right)\frac{1}{k}\int\limits_{|x| \geq \frac{\sqrt{k}}{\ln(k)} }\frac{x^2}{\sqrt{k\left(\xi-1\right)}}\vphi\left(\frac{x}{\sqrt{k\left(\xi-1\right)}}\right)dx\\
&=\left(1-\frac{1}{\ln(k)}\right)(\xi-1)\int\limits_{|y| \geq \frac{1}{\ln(k)\sqrt{\xi - 1}} }y^2\vphi(y)dy \xrightarrow{k\to \infty} \xi-1.\\
\end{align*}
Combining the above displays with \eqref{eq: kantorovich} we get,
	\begin{align*}
	\mathcal{W}_2^2(\mu_k,\gamma_1) &\geq\int\limits_{\RR} g_k(x) d\mu_k(x) - \int\limits_{\RR}Qg_k(x) d\gamma_1(x) \\
	&=  \left(1-\frac{1}{k}\right)\int\limits_{\RR}g_k(x) d\gamma_1(x) + \frac{1}{k}\int\limits_{\RR}g_k(x) d\gamma_{k(\xi-1)}(x) - \int\limits_{\RR}Qg_k(x) d\gamma_{1}(x)\xrightarrow{k\to \infty} \xi-1.\\
	\end{align*}
Finally, from \eqref{eq: entropy mixture} we obtain
\begin{align*}
\delta_{\mathrm{Tal}}(\mu_k) &= 2\mathrm{D}(\mu_k||\gamma_1) - \mathcal{W}_2^2\left(\mu_k,\gamma_1\right)\xrightarrow{k\to \infty} 0.
\end{align*}
\end{proof}
We remark that, in light of Theorem \ref{thm: mixture stability}, it would seem more natural to have as a counterexample a mixture of Gaussians with unit variance, as was done in \cite{eldan2019stability} for the log-Sobolev inequality. However, \eqref{eq: fathi bound} tells us that the situation in Talagrand's inequality is a bit more delicate, since the inequality is stable with respect to the $\mathcal{W}_1$ metric. Thus, as in the given example, a counterexample to stability (in the $\mathcal{W}_2$ metric or relative entropy) should satisfy $\lim\limits_{k \to \infty}\mathcal{W}_1(\mu_k,\gamma) = 0$, while $\liminf\limits_{k \to \infty}\mathcal{W}_2(\mu_k,\gamma) > 0$. keeping this in mind, it seems more straightforward to allow the second moments of the summands in the mixture to vary while keeping their means fixed at the origin. This is also very similar to the counterexample, obtained in \cite{courtade2018quantitative}, for the entropy power inequality.

\section{The F\"ollmer process} \label{sec: method} 
Our method is based on an entropy minimizing process, known in the literature as the F\"ollmer process. The high-level idea underlying this work is to use the process in order to embed a given measure as the terminal point of some martingale, in the Wiener space. This will induce a coupling between the measure and $\gamma$. As will be shown, the process also solves a variational problem, which turns out to yield a representation formula for the relative entropy. Combining these two properties will allow us to bound $\delta_{\mathrm{Tal}}(\mu)$ from below.\\
The process appears in the works of F\"ollmer (\cite{follmer1985entropy,follmer1986time}). It was later used by Borell in \cite{borell2003ehrhard} and Lehec in \cite{lehec2013representation} to give simple proofs of various functional inequalities, including Talagrand's Gaussian transport-entropy inequality.
Recently, the process was used in order to prove stability estimates for the Shannon-Stam (\cite{eldan2019shannon}) and log-Sobolev (\cite{eldan2019stability}) inequalities. In this section, we present the relevant details concerning the process. The reader is referred to \cite{lehec2013representation, eldan2018regularization,eldan2018clt} for further details and a more rigorous treatment. We will sketch the main ideas here for completeness. \\

Throughout this section we fix a measure $\mu$ on $\RR^d$ with expectation $0$, a finite second moment matrix and a density $f$, relative to $\gamma$. Consider the Wiener space $C([0,1],\RR^d)$ of continuous paths with the Borel sigma-algebra generated by the supremum norm $\norm{\cdot}_\infty$. We endow $C([0,1],\RR^d)$ with a probability measure $P$ and a process $B_t$ which is a Brownian motion under $P$. We will denote by $\omega$ elements of $C([0,1],\RR^d)$ and by $\FF_t$ the natural filtration of $B_t$. Define the measure $Q$ by
$$\frac{dQ}{dP}(\omega) = f(\omega_1).$$
$Q$ is absolutely continuous with respect to $P$, in which case, a converse to Girsanov's theorem implies that there exists a drift, $v_t$, adapted to $\FF_t$, in the Wiener space, such that the process 
\begin{equation} \label{eq: follmer process}
X_t := B_t + \int\limits_{0}^tv_s(X_s)ds,
\end{equation}
has the same law as $Q$, and that, under $Q$, $X_t$ is a Brownian motion. In particular, by construction, $X_1 \sim \mu$ and conditioned on $X_1$, $X_t$ serves a Gaussian bridge between $0$ and $X_1$. Thus, by the representation formula for Brownian bridges
\begin{equation} \label{eq: brownian bridge}
X_t \stackrel{\text{law}}{=} tX_1 + \sqrt{t(1-t)}G,
\end{equation}
where $G$ is a standard Gaussian, independent from $X_1$. We call $v_t(X_t)$ the F\"ollmer drift and $X_t$ the F\"ollmer process.
As $\mu$ and $\gamma$ are the laws of $X_1$ and $B_1$, it is now immediate that
\begin{align} \label{eq: entropy ineq}
\mathrm{D}(Q||P) \geq \mathrm{D}(\mu||\gamma).
\end{align}
A remarkable feature is that, since $\frac{dQ}{dP}$ depends only on the terminal points, the above is actually an equality and $\mathrm{D}(Q||P) = \mathrm{D}(\mu||\gamma)$. This implies that the drift, $v_t$, is a martingale (see Lemmas 10 and 11 in \cite{lehec2013representation}). \\
We now use Girsanov's theorem (\cite[Theorem 8.6.3]{oksendal2003stochastic}) to rewrite $\frac{dQ}{dP}$ as an exponential martingale,
$$\frac{dQ}{dP}(\omega) = \exp\left(-\int\limits_0^1v_t(\omega)dX_t(\omega) + \frac{1}{2}\int\limits_0^1\norm{v_t(\omega)}_2^2dt\right).$$
Under $Q$, $X_t$ is a Brownian motion, so
$$\mathrm D\left(Q||P\right) = \int\limits_{C([0,1],\RR^d)}\ln\left(\frac{dQ}{dP}\right)dQ = \frac{1}{2}\int\limits_{0}^1\EE\left[\norm{v_t(X_t)}_2^2\right]dt,$$
which gives the formula
\begin{equation} \label{eq: follmer energy}
\mathrm{D}(\mu||\gamma) = \frac{1}{2}\int\limits_{0}^1\EE\left[\norm{v_t(X_t)}_2^2\right]dt.
\end{equation}
For simplicity, from now on, we suppress the dependence of $v_t$ on $X_t$. Combining the above with \eqref{eq: entropy ineq} shows that among all adapted drifts $u_t$ such that $\mu \sim B_1 + \int\limits_0^1u_tdt$, $v_t$ minimizes the energy in the following sense
\begin{equation} \label{eq: variatinal follmer}
v_t = \arg\min\limits_{u_t} \frac{1}{2}\int\limits_{0}^1\EE\left[\norm{u_t}_2^2\right]dt.
\end{equation}  
Theorem 12 in \cite{lehec2013representation} capitalizes on the structure of $\frac{dP}{dQ}$ to give an explicit representation of $v_t$ as
\begin{equation}\label{eq: explicitvt}
v_t = \nabla\ln\left(P_{1-t}f(X_t)\right).
\end{equation}
where $P_{1-t}$ denotes the heat semi-group.
Since $v_t$ is a martingale, It\^o's formula shows
\begin{equation*} 
dv_t = \nabla v_tdB_t =  \nabla^2\ln\left(P_{1-t}f(X_t)\right)dB_t.
\end{equation*}

Lehec's proof of Talagrand's transport-entropy inequality relied on the fact that \eqref{eq: follmer process} induces a natural coupling between $\mu$ and $\gamma$ so that, by Jensen's inequality
$$\mathcal{W}_2^2\left(\mu,\gamma\right) \leq \EE\left[\norm{X_1 - B_1}_2^2\right] \leq \int\limits_0^1\EE\left[\norm{v_t}_2^2\right]dt = 2\mathrm{D}(\mu||\gamma).$$
Our goal is to make this quantitative.
\subsection{The martingale approach}
As was demonstrated in \cite{eldan2019shannon} and \cite{eldan2018clt} it is often easier to work with an equivalent martingale formulation of the F\"ollmer drift. Consider the Doob martingale $\EE\left[X_1|\FF_t\right]$. By the martingale representation theorem (\cite[Theorem 4.33]{oksendal2003stochastic}) there exists a uniquely-defined, adapted, matrix valued process $\Gamma_t$ which satisfies
\begin{equation} \label{eq: gammadef}
\EE\left[X_1|\FF_t\right] = \int\limits_0^t\Gamma_sdB_s.
\end{equation}
We claim that
\begin{equation} \label{eq: gamma follmer}
v_t = \int\limits_{0}^t\frac{\Gamma_s-\mathrm{I}_d}{1-s}dB_s.
\end{equation}
Indeed, by Fubini's theorem
\begin{align*}
\int\limits_0^1\Gamma_sdB_s = \int\limits_0^1\mathrm{I}_ddB_s + \int\limits_0^1\left(\Gamma_s-\mathrm{I}_d\right)dB_s &= B_1 + \int\limits_0^1\int\limits_s^1\frac{\Gamma_s-\mathrm{I}_d}{1-s}dtdB_s\\
&=B_1 + \int\limits_0^1\int\limits_0^t\frac{\Gamma_s-\mathrm{I}_d}{1-s}dB_sdt.
\end{align*}
For the moment denote $\tilde{v}_t := \int\limits_0^t\frac{\Gamma_s-\mathrm{I}_d}{1-s}dB_s$. Since $v_t$ is a martingale $v_t - \tilde{v}_t$ is a martingale as well and the above shows that for every $t \in [0,1]$, almost surely,
$$\int\limits_t^1(v_s - \tilde{v}_s)ds|\FF_t = 0.$$
This implies the identity \eqref{eq: gamma follmer}.
In particular, from \eqref{eq: explicitvt}, $\Gamma_t$ turns out to be symmetric, which shows, using It\^o's formula,
\begin{equation} \label{eq: entropy}
2\mathrm{D}(\mu||\gamma) = \int\limits_0^1\EE\left[\norm{v_t}_2^2\right]dt =\mathrm{Tr}\int\limits_0^1\int\limits_0^t\frac{\EE\left(\Gamma_s-\mathrm{I}_d\right)^2}{(1-s)^2}dsdt= \mathrm{Tr}\int\limits_0^1\frac{\EE\left(\Gamma_t-\mathrm{I}_d\right)^2}{1-t}dt.
\end{equation}
Also, note that $$B_1 + \int\limits_0^1\left(\Gamma_t - \mathrm{I}_d\right)dB_t = \int\limits_0^1\Gamma_tdB_t \sim \mu,$$
which implies
\begin{equation} \label{eq: wasserstein}
\mathcal{W}_2^2(\mu,\gamma)\leq \mathrm{Tr}\int\limits_0^1\EE\left[\left(\Gamma_t - \mathrm{I}_d\right)^2\right]dt.
\end{equation}
As $X_1 \sim \mu$, from \eqref{eq: explicitvt} we get
\begin{equation}\label{eq: fisher}
\mathrm{Tr}\int\limits_0^1\frac{\EE\left[\left(\Gamma_t- \mathrm{I}_d\right)^2\right]}{(1-t)^2}dt = \EE\left[\norm{v_1}_2^2\right] = \int\limits_{\RR^d}\norm{\nabla \ln(f(x))}_2^2d\mu(x) = \mathrm{I}(\mu||\gamma).
\end{equation}
Combining \eqref{eq: entropy},\eqref{eq: wasserstein},\eqref{eq: fisher}, we see a very satisfying connection between the log-Sobolev and Talagrand's transport-entropy inequalities, as 
$$\mathrm{Tr}\int\limits_0^1\frac{\EE\left[\left(\Gamma_t- \mathrm{I}_d\right)^2\right]}{(1-t)^2}dt \geq \mathrm{Tr}\int\limits_0^1\frac{\EE\left[\left(\Gamma_t- \mathrm{I}_d\right)^2\right]}{1-t}dt \geq \mathrm{Tr}\int\limits_0^1\EE\left[\left(\Gamma_t- \mathrm{I}_d\right)^2\right]dt,$$
implies
$$\mathrm{I}(\mu||\gamma) \geq 2\mathrm{D}(\mu||\gamma) \geq \mathcal{W}_2^2\left(\mu,\gamma\right).$$
In addition to its elegance, this representation can prove useful in the study of stability properties for those functional inequalities. 
We have the following representation for the deficits,
\begin{equation} \label{eq: deltals}
\delta_{\mathrm{LS}}\left(\mu\right) = \mathrm{Tr}\int\limits_0^1t\cdot\frac{\EE\left[\left(\Gamma_t- \mathrm{I}_d\right)^2\right]}{(1-t)^2}dt,
\end{equation}
\begin{equation} \label{eq: deltatal}
\delta_{\mathrm{Tal}}\left(\mu\right) \geq \mathrm{Tr}\int\limits_0^1t\cdot\frac{\EE\left[\left(\Gamma_t- \mathrm{I}_d\right)^2\right]}{1-t}dt.
\end{equation}
The above formulas are the key to Corollary \ref{cor: ls}.
\begin{proof}[Proof of Corollary \ref{cor: ls}]
	Note that by \eqref{eq: deltals} and \eqref{eq: deltatal}, any estimate on $\delta_{\mathrm{Tal}}(\mu)$ which is achieved by bounding $$\mathrm{Tr}\int\limits_0^1t\cdot\frac{\EE\left[\left(\Gamma_t- \mathrm{I}_d\right)^2\right]}{1-t}dt$$
	 from below will also imply a bound for $\delta_{\mathrm{LS}}(\mu)$. Since Theorem \ref{thm: small trace} and Theorem \ref{thm: mixture stability} are proved using this method, the corollary follows.
\end{proof}
One should remark that, by using \eqref{eq: explicitvt} and \eqref{eq: gamma follmer}, we have the identification
$$\nabla^2 \ln\left(P_{1-t} f(X_t)\right) = \frac{\Gamma_t - \mathrm{I}_d}{1-t},$$
Thus, all of the calculations above could have been done with respect to $v_t$, without appealing to the Doob martingale in \eqref{eq: gammadef}. \\

However, we hope that by introducing the matrix $\Gamma_t$, we may shed some new light on the subject, which may be of independent use elsewhere. As an example, consider the bounds proved in \cite[Theorem 6]{eldan2018clt}, for the rate of convergence in the entropic central limit theorem and in \cite[Lemma 2]{eldan2019stability} for stability of the Shannon-Stam inequality. In all cases, the proof of the quantitative estimates boils down to showing that the matrix $\Gamma_t$ is different than the identity, far from $0$. \\

We also feel that some calculations become more natural when dealing with the matrix $\Gamma_t$. As an easy example for the use of the above formulation, consider the case in which $\delta_{\mathrm{Tal}}(\mu) = 0$. By \eqref{eq: deltatal}, it follows that, for every $t \in[0,1]$, $\Gamma_t = \mathrm{I}_d$ almost surely. Thus, since $\mu \sim \int\limits_0^1\Gamma_tdB_t$, $\mu$ must be the standard Gaussian, which is known to be the only centered equality case (Of-course one could also conclude that $f \equiv 1$, which yields the same result). 

\subsection{Properties of the F\"ollmer process}
Our objective is now clear: In order to produce any stability estimates it will be enough to show, roughly speaking, that the process $\Gamma_t$ is far from $\mathrm{I}_d$, not too close to time $0$.  
In order to establish such claims we will use several other properties of the processes $\Gamma_t,v_t$, which we now state and prove.
First, as in \cite[Lemma 11]{eldan2018clt} it is possible use \eqref{eq: gamma follmer} along with integration by parts to obtain the identity:
\begin{align} \label{eq: int by parts}
\EE\left[v_t\otimes v_t\right] = \frac{\EE\left[\mathrm{I}_d-\Gamma_t\right]}{1-t} + \left(\mathrm{Cov}(\mu) - \mathrm{I}_d\right).
\end{align}
Combining the fact that $v_t$ is a martingale with \eqref{eq: gamma follmer} we also see
\begin{align} \label{eq: general bound}
\frac{d}{dt}\EE\left[\norm{v_t}_2^2\right] &=\mathrm{Tr}\frac{\EE\left[\left(\mathrm{I}_d-\Gamma_t\right)^2\right]}{(1-t)^2} \nonumber\\
&\geq \frac{1}{d}\left(\mathrm{Tr}\left(\frac{\EE\left[\mathrm{I}_d-\Gamma_t\right]}{1-t}\right)\right)^2 = \frac{\left(\EE\left[\norm{v_t}_2^2\right] - \mathrm{Tr}\left(\mathrm{Cov}(\mu)-\mathrm{I}_d\right)\right)^2}{d},
\end{align}
where we have used Cauchy-Schwartz for the inequality. Using this we prove the following two lemmas:
\begin{lemma} \label {lem: dgammat}
	It holds that
	$$\frac{d}{dt}\EE\left[\Gamma_t\right] = \frac{\EE\left[\Gamma_t\right] -\EE\left[\Gamma_t^2\right]}{1-t}.$$
\end{lemma}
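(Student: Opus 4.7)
The cleanest route is to combine the integration-by-parts identity \eqref{eq: int by parts} with Itô isometry applied to $v_t$. My plan is to compute $\tfrac{d}{dt}\EE[v_t\otimes v_t]$ in two different ways and equate.

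On one hand, \eqref{eq: int by parts} reads
$$\EE[v_t\otimes v_t] \;=\; \frac{\EE[\mathrm{I}_d-\Gamma_t]}{1-t} + (\mathrm{Cov}(\mu)-\mathrm{I}_d).$$
Differentiating the right-hand side with respect to $t$ via the quotient rule yields
$$\frac{d}{dt}\EE[v_t\otimes v_t] \;=\; \frac{-(1-t)\frac{d}{dt}\EE[\Gamma_t] + \EE[\mathrm{I}_d - \Gamma_t]}{(1-t)^2}.$$
On the other hand, since $dv_t = \tfrac{\Gamma_t-\mathrm{I}_d}{1-t}\,dB_t$ and $\Gamma_t$ is symmetric (as noted after \eqref{eq: gamma follmer}), the Itô isometry (in matrix form) gives
$$\EE[v_t\otimes v_t] \;=\; \int_0^t \frac{\EE\bigl[(\Gamma_s-\mathrm{I}_d)^2\bigr]}{(1-s)^2}\,ds, \qquad \text{so} \qquad \frac{d}{dt}\EE[v_t\otimes v_t] \;=\; \frac{\EE\bigl[(\Gamma_t-\mathrm{I}_d)^2\bigr]}{(1-t)^2}.$$

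Equating the two expressions, multiplying through by $(1-t)^2$, and expanding $(\Gamma_t-\mathrm{I}_d)^2 = \Gamma_t^2 - 2\Gamma_t + \mathrm{I}_d$, the $\mathrm{I}_d$ terms cancel and one is left with
$$(1-t)\,\frac{d}{dt}\EE[\Gamma_t] \;=\; \EE[\mathrm{I}_d-\Gamma_t] - \EE[(\Gamma_t-\mathrm{I}_d)^2] \;=\; \EE[\Gamma_t] - \EE[\Gamma_t^2],$$
which is exactly the claimed identity.

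The only step that requires a bit of care is the Itô isometry for the matrix-valued stochastic integral, but since $\Gamma_t-\mathrm{I}_d$ is symmetric, the computation of $\EE[v_tv_t^T]$ reduces to the scalar case entry by entry and raises no real difficulty. An alternative, more direct route would be to apply Itô's formula to $\Gamma_t = \mathrm{I}_d + (1-t)\nabla^2 \ln P_{1-t}f(X_t)$ and derive the SDE $d\Gamma_t = -\tfrac{(\Gamma_t-\mathrm{I}_d)\Gamma_t}{1-t}\,dt + (\text{martingale})$, then take expectations; however this requires justifying differentiation under the heat semigroup and is heavier than the $\EE[v_t\otimes v_t]$-based argument above.
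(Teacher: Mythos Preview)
Your proof is correct and follows essentially the same approach as the paper: compute $\frac{d}{dt}\EE[v_t\otimes v_t]$ once via It\^o isometry from \eqref{eq: gamma follmer} and once by differentiating the integration-by-parts identity \eqref{eq: int by parts}, then equate and rearrange. The algebra and justifications match the paper's argument line by line.
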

\begin{proof}
    Since $\Gamma_t$ is a symmetric matrix equation \eqref{eq: gamma follmer} implies
    $$\frac{d}{dt}\EE\left[v_t\otimes v_t\right] = \frac{\EE\left[(\mathrm{I}_d-\Gamma_t)^2\right]}{(1-t)^2}.$$
    Combined with \eqref{eq: int by parts}, this gives
	$$\frac{\EE\left[(\mathrm{I}_d-\Gamma_t)^2\right]}{(1-t)^2} = \frac{d}{dt}\frac{\EE\left[\mathrm{I}_d-\Gamma_t\right]}{1-t} = \frac{\EE\left[\mathrm{I}_d - \Gamma_t\right]- (1-t)\frac{d}{dt}\EE\left[\Gamma_t\right]}{(1-t)^2}.$$
	Rearranging the terms yields the result.
\end{proof}
\begin{lemma} \label{lem: dimension comparision}
	Suppose that $\mathrm{Tr}\left(\mathrm{Cov}(\mu)\right) \leq d$ and let $v_t$ be as defined above. Then:
	\begin{itemize}
		\item For $0 \leq t \leq \frac{1}{2}$, $\EE\left[\norm{v_t}_2^2\right] \leq \EE\left[\norm{v_{1/2}}_2^2\right]\frac{2d}{\EE\left[\norm{v_{1/2}}_2^2\right]\left(1-2t\right) + 2d}$.
		\item For $\frac{1}{2} \leq t \leq 1$, $\EE\left[\norm{v_t}_2^2\right] \geq \EE\left[\norm{v_{1/2}}_2^2\right]\frac{2d}{\EE\left[\norm{v_{1/2}}_2^2\right]\left(1-2t\right) + 2d}$.
	\end{itemize}
\end{lemma}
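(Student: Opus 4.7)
My strategy is to convert the matrix inequality \eqref{eq: general bound} into a scalar ODE inequality for $\phi(t) := \EE\left[\norm{v_t}_2^2\right]$, solve the associated ODE in closed form, and then invoke a standard comparison principle.

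First I would exploit the hypothesis $\mathrm{Tr}(\mathrm{Cov}(\mu)) \leq d$. This makes $\mathrm{Tr}(\mathrm{Cov}(\mu) - \mathrm{I}_d) \leq 0$, so the quantity $\phi(t) - \mathrm{Tr}(\mathrm{Cov}(\mu) - \mathrm{I}_d)$ is non-negative and dominates $\phi(t) \geq 0$. Squaring inside \eqref{eq: general bound} then yields the Riccati-type scalar inequality
$$\phi'(t) \geq \frac{\phi(t)^2}{d}.$$

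Next I would solve the corresponding equation $\psi'(t) = \psi(t)^2/d$ with initial value $\psi(1/2) = \phi(1/2)$ by separation of variables. Writing the ODE as $\frac{d}{dt}(-\psi^{-1}) = 1/d$ and integrating from $1/2$ to $t$ gives, after rearrangement,
$$\psi(t) = \frac{2d\,\phi(1/2)}{2d + \phi(1/2)(1 - 2t)},$$
which is exactly the expression appearing on the right-hand side of both bullets. To compare $\phi$ and $\psi$ I would introduce $R(t) := \psi(t)^{-1} - \phi(t)^{-1}$ on intervals where both are positive. A direct differentiation gives $R'(t) = -1/d + \phi'(t)/\phi(t)^2 \geq 0$, so $R$ is non-decreasing, and $R(1/2) = 0$. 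Hence $R(t) \leq 0$ for $t \leq 1/2$, yielding $\phi(t) \leq \psi(t)$ (the first bullet), and $R(t) \geq 0$ for $t \geq 1/2$, yielding $\phi(t) \geq \psi(t)$ (the second bullet).

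The only genuine subtlety is that $\psi$ may blow up at some $T_* \in (1/2, 1]$ if $\phi(1/2) > 2d$. For $t \in [1/2, T_*)$ the $R$ argument applies directly; past the blow-up, the denominator $2d + \phi(1/2)(1-2t)$ turns negative, so the claimed lower bound becomes vacuously true because $\phi \geq 0$ while the right-hand side of the bound is negative. The other minor edge case is $\phi(1/2) = 0$, in which case monotonicity of $\phi$ (itself implied by $\phi' \geq \phi^2/d \geq 0$) forces $\phi \equiv 0$ on $[0,1/2]$, and both bullets hold trivially. I expect this small edge-case bookkeeping to be the only mildly delicate point; the remainder is routine ODE comparison building on the identity \eqref{eq: general bound} derived earlier.
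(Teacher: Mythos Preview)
Your proposal is correct and follows essentially the same approach as the paper: both derive the scalar differential inequality $\phi'(t)\ge \phi(t)^2/d$ from \eqref{eq: general bound} under the hypothesis $\mathrm{Tr}(\mathrm{Cov}(\mu))\le d$, solve the associated ODE with initial value at $t=1/2$, and conclude by comparison. The paper simply invokes Gronwall's inequality at the last step, whereas you spell out the comparison via the monotone function $R(t)=\psi(t)^{-1}-\phi(t)^{-1}$ and treat the edge cases (possible blow-up of $\psi$ past $t=1/2$, and $\phi(1/2)=0$) that the paper leaves implicit; this extra care is harmless and, if anything, makes the argument more self-contained.
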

\begin{proof}
	Since $\mathrm{Tr}\left(\mathrm{Cov}(\mu)\right) \leq d$, \eqref{eq: general bound} gives
	$$\frac{d}{dt}\EE\left[\norm{v_t}_2^2\right]\geq \frac{\left(\EE\left[\norm{v_t}_2^2\right]\right)^2}{d}.$$
	The unique solution to the differential equation 
	$$g'(t) = \frac{g(t)^2}{d}, \text{ with inital condition } g\left(\frac{1}{2}\right)=\EE\left[\norm{v_{1/2}}_2^2\right],$$
	is given by 
	$$g(t) = \EE\left[\norm{v_{1/2}}_2^2\right]\frac{2d}{\EE\left[\norm{v_{1/2}}_2^2\right]\left(1-2t\right) + 2d}.$$
	The result follows by Gronwall's inequality
\end{proof}
To get a different type of inequality, but of similar flavor, recall \eqref{eq: brownian bridge}, 
 $$X_t \stackrel{\text{law}}{=} tX_1 + \sqrt{t(1-t)}G,$$ where $G$ is a standard Gaussian, independent from $X_1$. Now, suppose that $\mu$ satisfies a Poincar\'e inequality with optimal constant $\mathrm{C_p}(\mu)$. In this case $X_t$ satisfies a Poincar\'e inequality with a constant, smaller than, $t^2\mathrm{C_p}(\mu) + t(1-t)$. This follows from the fact that the Poincar\'e constant is sub-additive with respect to convolutions (\cite{borovkov1984inequality}) and that if $X\sim \nu$ and $aX \sim \nu_a$ for some $a \in \RR$, then $\mathrm{C_p}(\nu_a ) = a^2\mathrm{C_p}(\nu).$ Applying the Poincar\'e inequality to $v_t(X_t)$, we get
\begin{equation} \label{eq: follmer poincare}
\EE\left[\norm{v_t}_2^2\right] \leq  \left(t^2\mathrm{C_p}(\mu) + t(1-t)\right)\left[\norm{\nabla v_t}_2^2\right]= \left(t^2\mathrm{C_p}(\mu) + t(1-t)\right)\frac{d}{dt}\left[\norm{v_t}_2^2\right],
\end{equation}
where the equality is due to the fact that $v_t$ is a martingale.
Repeating the proof of Lemma \ref{lem: dimension comparision} for the differential equation
$$g(t) = \left(t^2\mathrm{C_p}(\mu) + t(1-t)\right)g'(t), \text{ with inital condition } g\left(\frac{1}{2}\right)=\EE\left[\norm{v_{1/2}}_2^2\right],$$
proves:
\begin{lemma} \label{lem: comparision}
	Assume that $\mu$ has a finite Poincar\'e constant $\mathrm{C_p}(\mu) < \infty$. Then, for $v_t$ defined as above:
	\begin{itemize}
		\item For $0 \leq t \leq \frac{1}{2},$
		$$\EE\left[\norm{v_t}_2^2\right] \leq  \EE\left[\norm{v_{1/2}}_2^2\right]\frac{\left(\mathrm{C_p}(\mu)+1\right)t}{\left(\mathrm{C_p}(\mu)-1\right)t + 1}.$$
		\item For $\frac{1}{2}\leq t \leq 1,$
		$$\EE\left[\norm{v_t}_2^2\right] \geq  \EE\left[\norm{v_{1/2}}_2^2\right]\frac{\left(\mathrm{C_p}(\mu)+1\right)t}{\left(\mathrm{C_p}(\mu)-1\right)t + 1}.$$
	\end{itemize} 
\end{lemma}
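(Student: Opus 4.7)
The proof is meant as a direct analogue of Lemma \ref{lem: dimension comparision}, with the Poincar\'e-based differential inequality \eqref{eq: follmer poincare},
$$f(t) \leq h(t)\, f'(t), \qquad f(t) := \EE\left[\norm{v_t}_2^2\right], \quad h(t) := t^2\mathrm{C_p}(\mu) + t(1-t),$$
replacing the Cauchy--Schwarz bound \eqref{eq: general bound} used there. The paragraph preceding the statement already produces this inequality: by \eqref{eq: brownian bridge}, $X_t$ is the convolution of $t X_1$ with a Gaussian of variance $t(1-t)$, so sub-additivity of the Poincar\'e constant under convolution together with the scaling $\mathrm{C_p}(\nu_a) = a^2 \mathrm{C_p}(\nu)$ gives $\mathrm{C_p}(\mathrm{Law}(X_t)) \leq h(t)$. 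Applying Poincar\'e to the coordinates of $v_t(X_t)$, which have mean zero under the centering assumption on $\mu$, and combining with the martingale identity $f'(t) = \EE[\norm{\nabla v_t}_{HS}^2]$, yields exactly \eqref{eq: follmer poincare}.

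Next, I would solve the matching ODE $g(t) = h(t)\, g'(t)$ with initial condition $g(1/2) = f(1/2)$. Factoring $h(t) = t\bigl((\mathrm{C_p}(\mu)-1)t + 1\bigr)$ and expanding $1/h$ in partial fractions reduces $(\ln g)' = 1/h$ to a one-line integration; matching the initial value pins down
$$g(t) = \EE\left[\norm{v_{1/2}}_2^2\right]\frac{(\mathrm{C_p}(\mu)+1)\, t}{(\mathrm{C_p}(\mu)-1)\, t + 1},$$
which is exactly the expression on the right-hand side of the lemma.

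The final step is the Gronwall-type comparison already carried out in Lemma \ref{lem: dimension comparision}. The differential inequality $f \leq h f'$ is equivalent to $(\ln f)' \geq 1/h = (\ln g)'$, so integrating from $t$ up to $1/2$ (when $t \leq 1/2$) and using $f(1/2) = g(1/2)$ gives $f(t) \leq g(t)$; integrating instead from $1/2$ up to $t$ (when $t \geq 1/2$) reverses the sign of the accumulated integral and gives $f(t) \geq g(t)$. The only substantive step is this direction-of-integration observation, which is precisely what produces the dichotomy at $t = 1/2$ in the statement; the degenerate case $f \equiv 0$, which forces $\mu = \gamma$ and trivializes both inequalities, can be dispatched separately. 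Modulo these bookkeeping issues every computation is routine.
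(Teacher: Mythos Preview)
Your proposal is correct and follows exactly the route the paper takes: the paper's entire argument is the sentence ``Repeating the proof of Lemma~\ref{lem: dimension comparision} for the differential equation $g(t)=\bigl(t^2\mathrm{C_p}(\mu)+t(1-t)\bigr)g'(t)$ with initial condition $g(1/2)=\EE[\|v_{1/2}\|_2^2]$,'' and you have spelled out precisely that repetition, including the partial-fraction integration and the two-sided Gronwall comparison. Your explicit mention of the centering (so that $\EE[v_t]=0$ and the Poincar\'e inequality controls the full second moment) and of the degenerate case $f\equiv 0$ are points the paper leaves implicit.
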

\section{Stability for Talagrand's transportation-entropy inequality} \label{sec: stability}
We begin this section by showing two ways the F\"ollmer process may be used to establish quantitative stability estimates.
As before, $\mu$ is a fixed measure on $\RR^d$ with finite second moment matrix. $\Gamma_t$ and $v_t$ are defined as in the previous section.
Fix $t_0 \in[0,1]$, by \eqref{eq: deltatal}, we see
$$\delta_{\mathrm{Tal}}(\mu) \geq t_0 \mathrm{Tr}\int\limits_{t_0}^1\frac{\EE\left[\left(\mathrm{I}_d-\Gamma_t\right)^2\right]}{1-t}dt.$$
Now, using \eqref{eq: gamma follmer}, we obtain, by Fubini's theorem,
$$\int\limits_{t_0}^1\left(\EE\left[\norm{v_s}^2_2\right]- \EE\left[\norm{v_{t_0}}_2^2\right]\right)ds = \mathrm{Tr}\int\limits_{t_0}^1\int\limits_{t_0}^s\frac{\EE\left[\left(\mathrm{I}_d - \Gamma_t\right)^2\right]}{(1-t)^2}dtds = \mathrm{Tr}\int\limits_{t_0}^1\frac{\EE\left[\left(\mathrm{I}_d - \Gamma_t\right)^2\right]}{1-t}dt, $$
and
\begin{equation} \label{eq: truncation}
\delta_{\mathrm{Tal}}(\mu) \geq t_0\left(\int\limits_{t_0}^1\EE\left[\norm{v_t}_2^2\right]dt - (1-t_0)\EE\left[\norm{v_{t_0}}_2^2\right]\right)\geq t_0(1-t_0)\left(2\mathrm{D}(\mu||\gamma)-\EE\left[\norm{v_{t_0}}_2^2\right]\right),
\end{equation}
where we have used \eqref{eq: entropy} and the fact that $v_t$ is a martingale.
Another useful bound will follow by applying \eqref{eq: gamma follmer} to rewrite \eqref{eq: deltatal} as
$$\delta_{\mathrm{Tal}}(\mu) \geq \mathrm{Tr}\int\limits_0^1t(1-t)\cdot\frac{\EE\left[\left(\Gamma_t- \mathrm{I}_d\right)^2\right]}{(1-t)^2}dt = \int\limits_0^1t(1-t)\frac{d}{dt}\EE\left[\norm{v_t}_2^2\right]dt.$$
Integration by parts then gives
\begin{equation}\label{eq: halving}
\delta_{\mathrm{Tal}}(\mu) \geq \int\limits_0^1(2t-1)\EE\left[\norm{v_t}_2^2\right]dt.
\end{equation}
At an informal level, the above formula becomes useful if one is able to show that $\EE\left[\norm{v_t}_2^2\right]$ is large for $t \geq \frac{1}{2}$ and small otherwise.
\subsection{Measures with a finite Poincar\'e constant}
We now assume that the measure $\mu$ has a finite Poincar\'e constant $\mathrm{C_p}(\mu) < \infty$. 
\begin{proof} [Proof of Theorem \ref{thm: poincare stability}]
	First, suppose that $\EE\left[\norm{v_{1/2}}_2^2\right] \leq \mathrm{D}(\mu||\gamma)$. In this case \eqref{eq: truncation} shows
	$$\delta_{\mathrm{Tal}}\geq \frac{1}{4}\mathrm{D}(\mu||\gamma).$$
	Otherwise, $\EE\left[\norm{v_{1/2}}_2^2\right] > \mathrm{D}(\mu||\gamma)$, and plugging Lemma \ref{lem: comparision} into \eqref{eq: halving} shows
	\begin{align*} 
	\delta_{\mathrm{Tal}}(\mu) &\geq \mathrm{D}(\mu||\gamma)\int\limits_0^1(2t-1)\frac{(\mathrm{C_p}(\mu) + 1)t}{(\mathrm{C_p}(\mu) - 1)t +1}dt\nonumber\\
	&=\mathrm{D}(\mu||\gamma)\frac{(\mathrm{C_p}(\mu)+1)\left(2-2\mathrm{C_p}(\mu) +(\mathrm{C_p}(\mu)+1)\ln\left(\mathrm{C_p}(\mu)\right)\right)}{(\mathrm{C_p}(\mu)-1)^3},
	\end{align*}
	where the equality relies on the fact
	\begin{align*}
\frac{d}{dt}&\frac{(\mathrm{C_p}(\mu)+1)\left((\mathrm{C_p}(\mu)-1)t(\mathrm{C_p}(\mu)(t-1) -1-t)+(\mathrm{C_p}(\mu)+1)\ln\left((\mathrm{C_p}(\mu)-1)t+1\right)\right)}{\left(\mathrm{C_p}(\mu) - 1\right)^3}\\
	&= (2t-1)\frac{(\mathrm{C_p}(\mu)+1)t}{(\mathrm{C_p}(\mu) - 1)t +1}.
	\end{align*}
	The proof is complete.
\end{proof}
\subsection{Measures with small covariance}
Here we work under the assumption  $\mathrm{Tr}(\mathrm{Cov}(\mu)) \leq d$ and prove Theorem \ref{thm: small trace}.
\begin{proof}[Proof of Theorem \ref{thm: small trace}]
	Denote $c_\mu = \EE\left[\norm{v_{1/2}}_2^2\right]$. We begin by considering the case $c_\mu \leq \mathrm{D}(\mu||\gamma)$. In this case, \eqref{eq: truncation} shows
	$$\delta_{\mathrm{Tal}}(\mu)\geq \frac{1}{4}\mathrm{D}(\mu||\gamma).$$
	In the other case, $c_\mu > \mathrm{D}(\mu||\gamma)$ and Lemma \ref{lem: dimension comparision}, along with \eqref{eq: halving}, gives
	\begin{align*}
	\delta_{\mathrm{Tal}}(\mu) &\geq 2d\int\limits_0^1\frac{c_\mu(2t-1)}{c_\mu\left(1-2t\right) + 2d}dt\\
	&=2d\left(\frac{-d\ln\left(c_\mu+2d-2c_\mu t\right)-c_\mu t}{c_\mu}\right)\Big\vert_0^1 \\
	&=\frac{2d\left(d\ln(2d +c_\mu)-d\ln(2d-c_\mu)-c_\mu\right)}{c_\mu}\\ &=2d\left(\frac{2d\coth^{-1}\left(\frac{2d}{c_\mu}\right)}{c_\mu}-1\right).
	\end{align*}
	Note that \eqref{eq: int by parts} implies $c_\mu\leq 2d$, so the above is well defined. Also, for any $x \geq 1$, we have the inequality $\coth^{-1}(x)\cdot x - 1\geq \frac{1}{3x^2}$, applying it to the previous bound then gives
	$$\delta_{\mathrm{Tal}}(\mu)\geq \frac{c_\mu^2}{6d}>\frac{\mathrm{D}\left(\mu||\gamma\right)^2}{6d}.$$
\end{proof}
We can get a dimension free bound by considering directions $v \in \RR^d$ in which $\mathrm{Cov}(\mu)$ is strictly smaller than the identity. For this we use Lemma \ref{lem: dgammat} to establish:
$$\frac{d}{dt}\EE\left[\Gamma_t\right] = \frac{\EE\left[\Gamma_t\right] - \EE\left[\Gamma_t^2\right]}{1-t} \preceq \frac{\EE\left[\Gamma_t\right] - \EE\left[\Gamma_t\right]^2}{1-t}.$$
Fix $v \in \RR^d$, a unit vector, and define $f(t)=\left\langle v, \EE\left[\Gamma_t\right]v\right\rangle$. As $\EE\left[\Gamma_t\right]$ is symmetric, by Cauchy-Schwartz
$$\left\langle v,\EE\left[\Gamma_t\right]v\right\rangle^2 \leq \left\langle v,\EE\left[\Gamma_t\right]^2v\right\rangle.$$
This implies
$$\frac{d}{dt}f(t) \leq  \frac{f(t)(1-f(t))}{1-t}.$$
If $\left\langle v, \EE\left[\Gamma_0\right]v\right\rangle = \lambda$, from Gronwall's inequality we get
\begin{equation} \label{eq: gamma0}
\left\langle v, \EE\left[\Gamma_t\right]v\right\rangle\leq \frac{\lambda}{(\lambda-1)t + 1}.
\end{equation}
Using this, we prove Theorem \ref{thm: small covariance}.
\begin{proof}[Proof of Theorem \ref{thm: small covariance}]
	 For $\lambda_i < 1$, let $w_i$ be the unit eigenvector of $\mathrm{Cov}(\mu)$, corresponding to $\lambda_i$. From \eqref{eq: gamma0} we deduce, for every $t \in [0,1]$,
	 $$0\leq\left\langle w_i, \EE\left[\Gamma_t\right]w_i\right\rangle \leq 1.$$
	 We now observe that as $v_t$ is a martingale, and since $\mu$ is centered, it must hold that $v_0 = 0$, almost surely. 
	 Combining this with \eqref{eq: int by parts} shows $\EE\left[\Gamma_0\right] = \mathrm{Cov}(\mu)$ and in particular
	 $$\left\langle w_i, \EE\left[\Gamma_0\right]w_i\right\rangle = \lambda_i.$$
	 Using \eqref{eq: gamma0} and the fact that $\EE\left[\Gamma_t\right]$ is symmetric, we obtain:
	\begin{align*}
	 t\frac{\left\langle w_i,\EE\left[\left(\mathrm{I}_d-\Gamma_t\right)^2\right]w_i\right\rangle}{1-t} \geq  t\frac{\left(\left\langle w_i,\EE\left[\mathrm{I}_d-\Gamma_t\right]w_i\right\rangle\right)^2}{1-t} \geq \frac{t\left(1 - \frac{\lambda_i}{\left(\lambda_i-1\right)t + 1}\right)^2}{1-t} = t(1-t)\left(\frac{\lambda_i-1}{(\lambda_i-1)t+1}\right)^2.
	\end{align*}
	 So, by \eqref{eq: deltatal},
	 \begin{align*}
	 \delta_{\mathrm{Tal}}\left(\mu\right)&\geq \mathrm{Tr}\int\limits_0^1t\cdot\frac{\EE\left[\left(\mathrm{I}_d - \Gamma_t\right)^2\right]}{1-t}dt \geq \sum_{i=1}^d\mathbbm{1}_{\{\lambda_i < 1\}}\int\limits_0^1t\cdot\frac{\left\langle v_i,\EE\left[\left(\mathrm{I}_d - \Gamma_t\right)^2\right]v_i\right\rangle}{1-t}dt\\
	 &\geq \sum_{i=1}^d\mathbbm{1}_{\{\lambda_i < 1\}}\int\limits_0^1t(1-t)\left(\frac{\lambda_i-1}{(\lambda_i-1)t+1}\right)^2dt \\
	 &= \sum_{i=1}^d\frac{2(1 - \lambda_i) + (\lambda_i+1)\log(\lambda_i)}{\lambda_i - 1}\mathbbm{1}_{\{\lambda_i < 1\}}.
	 \end{align*}
\end{proof}
\subsection{Stability with respect to Gaussian mixtures}
In this section we prove Theorem \ref{thm: mixture stability}. Our proof is based on \cite{eldan2019stability}, but we use our framework to give an improved analysis. To control the relative entropy we will use a specialized case of the bound given in \cite{eldan2018clt}. We supply here a sketch of the proof for completeness.

\begin{lemma} \label{lem: entropy bound}
	Let $H_t$ be an $\FF_t$-adapted matrix-valued processes and let $N_t$ be defined by
	$$
	N_t = \int_0^t H_s dB_s.
	$$
	Suppose that $\tilde{H}_t$ is such that for some $t_0 \in [0,1]$:
	\begin{enumerate}
		\item $\tilde{H}_t = H_t$ almost surely, for $t < t_0$.
		\item For $t \geq t_0$, $\tilde{H}_t$ is deterministic  and $\tilde{H}_t \succ \mathrm{I}_d$.
	\end{enumerate}
	Then, if $M_t$ is defined by
	$$
	M_t = \int_0^t \tilde{H}_s dB_s,
	$$
	we have
	$$\mathrm{Ent}\left(N_1||M_1\right) \leq \mathrm{Tr}\int\limits_{t_0}^1\frac{\EE\left[\left(H_t-\tilde{H}_t\right)^2\right]}{1-t}dt. $$
\end{lemma}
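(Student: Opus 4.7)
The plan is to set up a Girsanov change of measure that converts the $\tilde H$-driven reference process into the $H$-driven target process, and then conclude by the data processing inequality for relative entropy. The $1/(1-t)$ weight in the claimed bound will arise from a stochastic Fubini rearrangement of an Itô integral.

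First I would construct on the same probability space the $\cF_t$-adapted drift
$$\beta_s := \int_{t_0}^s \frac{H_r - \tilde H_r}{1-r}\, dB_r, \qquad s \in [t_0,1],$$
and the auxiliary process $Y_t := N_{t_0} + \int_{t_0}^t \tilde H_s\, dB_s + \int_{t_0}^t \beta_s\, ds$. Stochastic Fubini gives $\int_{t_0}^1 \beta_s\, ds = \int_{t_0}^1 (H_r - \tilde H_r)\, dB_r$, so $Y_1 = N_1$ almost surely under $\PP$. The weight $1/(1-r)$ is exactly what makes this Fubini work, and this is the origin of the corresponding weight that appears in the lemma.

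Second, since $\tilde H_s \succ \mathrm{I}_d$ is invertible, I would take the Girsanov drift $\theta_s := -\tilde H_s^{-1}\beta_s$ (extended by zero for $s<t_0$) and define the equivalent measure
$$\frac{d\mathbb{Q}}{d\PP} := \exp\left(\int_{t_0}^1 \theta_s\, dB_s - \frac{1}{2}\int_{t_0}^1 \norm{\theta_s}_2^2\, ds\right).$$
Under $\mathbb{Q}$, the process $\bar B_t := B_t - \int_0^t \theta_s\, ds$ is Brownian; the drift in $Y$ cancels exactly because $\tilde H_s \theta_s = -\beta_s$, leaving $Y_1 = \int_0^1 \tilde H_s\, d\bar B_s$, which has the same law as $M_1$. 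Applying the data processing inequality to the map $\omega\mapsto Y_1(\omega)$ yields
$$\mathrm{Ent}(N_1 \| M_1) \leq \mathrm{Ent}(\PP \| \mathbb{Q}) = \tfrac{1}{2}\EE\int_{t_0}^1 \norm{\theta_s}_2^2\, ds.$$
The hypothesis $\tilde H_s \succ \mathrm{I}_d$ enters here, through $\tilde H_s^{-2} \prec \mathrm{I}_d$, to give the pointwise bound $\norm{\theta_s}_2^2 \leq \norm{\beta_s}_2^2$. Itô's isometry and another Fubini then evaluate
$$\EE \int_{t_0}^1 \norm{\beta_s}_2^2\, ds = \int_{t_0}^1 \mathrm{Tr}\int_{t_0}^s \frac{\EE[(H_r - \tilde H_r)^2]}{(1-r)^2}\, dr\, ds = \mathrm{Tr}\int_{t_0}^1 \frac{\EE[(H_r - \tilde H_r)^2]}{1-r}\, dr,$$
from which the claimed inequality follows (in fact with an extra factor of $1/2$ to spare).

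The main technical obstacle I anticipate is making the Girsanov change of measure rigorous: the exponential $d\mathbb{Q}/d\PP$ must be a genuine martingale and not merely a local one, which requires a Novikov-type integrability condition on $\theta$. When the right-hand side of the lemma is infinite the inequality is trivial, so one may restrict to the case of finite energy and conclude by a standard localization/truncation argument together with lower semicontinuity of relative entropy.
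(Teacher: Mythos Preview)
Your proposal is correct and follows essentially the same route as the paper's proof: the same auxiliary process $Y_t$ built from the drift $\beta_s=\int_{t_0}^s\frac{H_r-\tilde H_r}{1-r}\,dB_r$, the same Girsanov tilt with $\theta_s=-\tilde H_s^{-1}\beta_s$, the same identification of the law of $Y_1$ under the tilted measure with that of $M_1$, and the same Fubini/It\^o-isometry computation producing the $1/(1-t)$ weight. The only cosmetic difference is that you phrase the final step as the data processing inequality for $\omega\mapsto Y_1(\omega)$, whereas the paper applies Jensen directly to $\rho(Y_1)\mathcal E$; these are the same argument, and the paper likewise notes the Novikov-type integrability issue you flag at the end (handled there by reduction to the finite-energy case and approximation).
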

\begin{proof}
	Define the process 
	$$Y_t = \int\limits_0^t\tilde{H}_sdB_s + \int\limits_{0}^t\int\limits_{0}^s\frac{H_r - \tilde{H}_r}{1-r}dB_rds.$$
	Denote $u_t = \int\limits_0^t \frac{H_s - \tilde{H}_s}{1-s}dB_s$,
	so that, $dY_t = \tilde{H}_tdB_t + u_tdt$, and, by assumption $u_t = 0$, whenever $t < t_0$. 
	It follows that $Y_t = M_t$ for $t < t_0$ and that, using Fubini's theorem, $Y_1 = N_1$. Indeed, 
	\begin{equation} \label{eq:girsannovfubini}
	Y_1 = \int\limits_0^1\tilde{H}_tdB_t + \int\limits_0^1\int\limits_0^t\frac{H_s-\tilde{H}_s}{1-s}dB_sdt = \int\limits_0^1\tilde{H}_tdB_t + \int\limits_0^1\left(H_t-\tilde{H}_t\right)dB_t =N_1.
	\end{equation}
	We denote now by $P$, the measure under which $B$ is a Brownian motion. If 
	$$
	\mathcal{E} := \exp\left(-\int\limits_0^1\tilde{H}_t^{-1}u_tdB_t - \frac{1}{2}\int\limits_0^1\norm{\tilde{H}_t^{-1}u_t}^2dt\right),
	$$
	and we define the tilted measure $Q = \mathcal{E}P$, then by Girsanov's theorem, $\tilde{B}_t = B_t + \int\limits_0^t \tilde{H}_s^{-1}u_sds$ is a Brownian motion under $Q$ and we have the representation 
	$$Y_t = \int\limits_0^t \tilde{H}_sd\tilde{B}_s.$$
	 If $ t < t_0$, then as $u_t = 0$, we have $\tilde{B}_t = B_t$ and $Y_{t_0}$ has the same law under $Q$ and under $P$, which is the law of $M_{t_0}$. Moreover, for $t \geq t_0$, $\tilde{H}_t$ is deterministic. Therefore, it is also true that the law of
	 $$Y_{t_0} + \int\limits_{t_0}^1 \tilde{H}_td\tilde{B}_t,$$
	 under $Q$ and the law of 
	 $$Y_{t_0} + \int\limits_{t_0}^1 \tilde{H}_tdB_t,$$
	 under $P$ coincide.
	 We thus conclude that, under $Q$, $Y_1$ has the same law as $M_1$ under $P$.
	In particular, if $\rho$ is the density of $Y_1$ with respect to $M_1$, this implies
	$$1 = \EE_P\left[\rho(M_1)\right]=\EE_Q\left[\rho(Y_1)\right]=\EE_P\left[\rho(Y_1)\mathcal{E}\right].$$
	By Jensen's inequality, under $P$,
	$$0 = \ln\left(\EE\left[\rho(Y_1)\mathcal{E}\right]\right) \geq \EE\left[\ln\left(\rho(Y_1)\mathcal{E}\right)\right] = \EE\left[\ln(\rho(Y_1))\right] + \EE\left[\ln(\mathcal{E})\right].$$
	But,
	\begin{align*}
	-\EE\left[\ln(\mathcal{E})\right] &= \frac{1}{2}\int\limits_0^1\EE\left[\norm{\tilde{H}_t^{-1}u_t}^2\right]dt \leq \int \limits_{t_0}^1\EE\left[\norm{u_t}^2\right]dt\\
	& = \mathrm{Tr}\int\limits_{t_0}^1\int\limits_{t_0}^s\frac{\EE\left[\left(H_s - \tilde{H}_s\right)^2\right]}{(1-s)^2}dsdt = \mathrm{Tr}\int\limits_{t_0}^1\int\limits_{s}^1\frac{\EE\left[\left(H_s - \tilde{H}_s\right)^2\right]}{(1-s)^2}dtds\\
	& =  \mathrm{Tr}\int\limits_{t_0}^1\frac{\EE\left[\left(H_s - \tilde{H}_s\right)^2\right]}{1-s}ds,
	\end{align*}
	and, from \eqref{eq:girsannovfubini}
	$$\EE_P\left[\ln(\rho(Y_1))\right] = \mathrm{Ent}(N_1||M_1),$$
	which concludes the proof.
\end{proof}
\begin{remark} \label{rmk:girsanov}
	In order to apply Girsanov's theorem in the proof above, one must also require some integrability condition from the drift $u_t$. It will suffice to assume 
	$$\int\limits_0^1\EE\left[\norm{\tilde{H}_t^{-1}u_t}^2\right]dt < \infty.$$
	Indeed, if $\int\limits_0^1\norm{\tilde{H}_t^{-1}u_t}^2dt$ is uniformly bounded, then Novikov's criterion applies. The general case may then be obtained by an approximation argument (see \cite[Proposition 1]{lehec2013representation} for more details). In our application below this condition will be satisfied.
\end{remark}
We are now in a position to prove that stability with respect to Gaussian mixtures holds in relative entropy.
\begin{proof}[Proof of Theorem \ref{thm: mixture stability}]
	Fix $t_0\in[0,1]$, by \eqref{eq: deltatal} we get
	\begin{equation} \label{eq: truncated delta}
	\delta_{\mathrm{Tal}}(\mu) \geq t_0\mathrm{Tr}\int\limits_{t_0}^1\frac{\EE\left[\left(\mathrm{I}_d-\Gamma_t\right)^2\right]}{1-t}dt.
	\end{equation}
	Define the matrix-valued process
	$$\tilde{\Gamma}_t = \begin{cases}
	\Gamma_t& 0 \leq t < t_0\\
	\frac{1 - t_0}{t_0(t-2)+ 1}\mathrm{I}_d& t_0 \leq t \leq 1\\
	\end{cases},$$
	and the martingale
	$$M_t = \int\limits_{0}^t\tilde{\Gamma}_sdB_s.$$
	One may verify that
	$$\int\limits_{t_0}^1\left(\frac{1-t_0}{t_0(t-2) + 1}\right)^2dt = 1,$$
	which implies, $M_1 - M_{t_0} = \int\limits_{t_0}^1\tilde{\Gamma}_t(M_t)dB_t \sim \gamma.$ Also, from \eqref{eq: gammadef},
	$$M_t = \int\limits_{0}^{t_0}\tilde{\Gamma}_tdB_t = \int\limits_{0}^{t_0}\Gamma_tdB_t= \EE\left[X_1|\mathcal{F}_{t_0}\right].$$
	If $\nu_{t_0}$ is the law of $\EE\left[X_1|\mathcal{F}_{t_0}\right]$, then since $\{B_s\}_{s>t_0}$ is independent from $\EE\left[X_1|\mathcal{F}_{t_0}\right]$, we have that $\nu_{t_0}*\gamma$ is the law of $M_1$.\\
	
	 We now invoke Lemma \ref{lem: entropy bound} with the process $\EE\left[X_1|\FF_t\right]$ as $N_t$. Since $\tilde{\Gamma}_t$ meets the conditions of the lemma, we get
	\begin{align*}
	\mathrm{D}(X_1||M_1) = \mathrm{D}(\mu||\nu_{t_0}*\gamma) &\leq \mathrm{Tr}\int\limits_{t_0}^1\frac{\EE\left[\left(\Gamma_t-\tilde{\Gamma}_t\right)^2\right]}{1-t}dt \\
	 &\leq 2\mathrm{Tr}\int\limits_{t_0}^1\frac{\EE\bigg[\Big(\Gamma_t-\mathrm{I}_d\Big)^2\bigg]}{1-t}dt + 2\mathrm{Tr}\int\limits_{t_0}^1\frac{\EE\left[\left(\tilde{\Gamma}_t-\mathrm{I}_d\right)^2\right]}{1-t}dt.
	\end{align*}	
	Observe that by showing that the above integrals are finite we will also verify the integrability condition from Remark \ref{rmk:girsanov}. Applying \eqref{eq: truncated delta},
	$$2\mathrm{Tr}\int\limits_{t_0}^1\frac{\EE\bigg[\Big(\Gamma_t-\mathrm{I}_d\Big)^2\bigg]}{1-t}dt \leq 2\frac{\delta_{\mathrm{Tal}}(\mu)}{t_0}.$$
	To bound the second term we calculate 
	\begin{align*}
	2\mathrm{Tr}\int\limits_{t_0}^1\frac{\EE\left[\left(\tilde{\Gamma}_t-\mathrm{I}_d\right)^2\right]}{1-t}dt &= 2d\int\limits_{t_0}^1\frac{\left(\frac{1 - {t_0}}{{t_0}(t-2)+ 1} - 1\right)^2}{1-t}dt\\
	 &=2d\left(-\ln(1 +{t_0}(t-2))-\frac{1-{t_0}}{2(t-{t_0}) + 1}\right)\Big\vert_{t_0}^1\\
	 &=2d\left(\ln(1-{t_0}) + \frac{{t_0}}{1-{t_0}}\right).
	\end{align*}
	Combining the last displays, we get
	$$\mathrm{D}(\mu||\nu_{t_0}*\gamma) \leq2\left(\frac{\delta_{\mathrm{Tal}}(\mu)}{t_0} + d\left(\ln(1-{t_0}) + \frac{{t_0}}{1-{t_0}}\right)\right).$$
	Suppose that $\delta_{\mathrm{Tal}}(\mu) \geq d$, then choosing ${t_0} = \frac{1}{2}$ gives
	$$\frac{\mathrm{D}(\mu||\nu_{t_0}*\gamma)}{6}  \leq  \delta_{\mathrm{Tal}}(\mu).$$
	Otherwise, $\delta_{\mathrm{Tal}}(\mu) < d$ and we choose ${t_0} = \left(\frac{\delta_{\mathrm{Tal}}(\mu)}{d}\right)^{\frac{1}{3}} \leq \frac{1}{2}$. A second order approximation, shows that for $s \in[0,\frac{1}{2}]$,
	$$\ln(1-s) + \frac{s}{1-s} \leq 2s^2.$$
	Hence, for the above choice of ${t_0}$,
	$$\mathrm{D}(\mu||\nu_{t_0}*\gamma) \leq2\frac{\delta_{\mathrm{Tal}}(\mu)}{t_0} +4d{t_0}^2 = 3\delta_{\mathrm{Tal}}(\mu)^{\frac{2}{3}}d^{\frac{1}{3}}.$$
	This implies
	 $$\frac{1}{3\sqrt{3}}\frac{\mathrm{D}(\mu||\nu_{t_0}*\gamma)^{\frac{3}{2}}}{\sqrt{d}}\leq \delta_{\mathrm{Tal}}(\mu),$$
	which is the desired claim.
	Finally, by the law of total variance, it is immediate that
	$$\mathrm{Cov}\left(\nu_{t_0}\right)\preceq\mathrm{Cov}\left(\mu\right).$$
\end{proof}
\section{An application to Gaussian concentration} \label{sec: application}
We now show that our stability bounds imply an improved Gaussian concentration inequality for concave functions. 
\begin{corollary} \label{cor: concntration}
	Let $f$ be a concave function and $G \sim \gamma$ in $\RR^d$. Suppose that $f$ is even, then for any $t \geq0$,
	$$\PP\left(f(G)\geq t\right) \leq e^{-\frac{4t^2}{7\EE\left[\norm{\nabla f(G)}_2^2\right]}}.$$
\end{corollary}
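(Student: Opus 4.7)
The plan is to apply Theorem \ref{thm: poincare stability} to the Gaussian restricted to the super-level set $A := \{x \in \RR^d : f(x) \geq t\}$ and then extract the exponential tail via Kantorovich duality. Concavity of $f$ plays two distinct roles here: it makes $A$ convex, and it allows the usual Lipschitz constant appearing in the standard $\mathcal{W}_2$-lower bound to be replaced by $\EE_\gamma[\norm{\nabla f}_2^2]$, which is precisely the denominator one wants.

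Set $\mu_A := \gamma|_A / \gamma(A)$. Since $A$ is convex, $\mu_A$ is log-concave with potential $\tfrac{1}{2}\norm{\cdot}_2^2 + \iota_A$, which is $1$-strongly convex on $A$. Brascamp-Lieb then gives $\mathrm{Cov}(\mu_A) \preceq \mathrm{I}_d$ and Poincar\'e constant $\mathrm{C_p}(\mu_A) \leq 1$; the evenness of $f$ makes $A = -A$ symmetric, so $\mu_A$ is centered; and a direct computation yields $\mathrm{D}(\mu_A||\gamma) = \ln(1/\gamma(A))$. Feeding this into Theorem \ref{thm: poincare stability} and noting (by a standard L'H\^opital expansion) that the Poincar\'e factor $g(x) = \frac{(x+1)(2-2x+(x+1)\ln x)}{(x-1)^3}$ satisfies $\lim_{x \to 1^-} g(x) = 1/3 > 1/4$, the $\min$ in the theorem collapses to $1/4$, yielding
\begin{equation*}
\mathcal{W}_2^2(\mu_A,\gamma) \leq \frac{7}{4}\ln\frac{1}{\gamma(A)}.
\end{equation*}

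For the matching lower bound on $\mathcal{W}_2^2(\mu_A,\gamma)$ I would use the Kantorovich dual
$\mathcal{W}_2^2(\mu_A,\gamma) \geq \int \lambda f\, d\mu_A - \int (\lambda f)^c\, d\gamma$,
where $(\lambda f)^c(x) = \sup_y\{\lambda f(y) - \norm{y-x}_2^2\}$. The tangent inequality $f(y) \leq f(x) + \nabla f(x)\cdot(y-x)$ for concave $f$, followed by explicit maximization in $y$, produces the key pointwise estimate $(\lambda f)^c(x) \leq \lambda f(x) + \frac{\lambda^2}{4}\norm{\nabla f(x)}_2^2$. Integrating against $\gamma$ and optimizing over $\lambda \geq 0$ then leads to
\begin{equation*}
\mathcal{W}_2^2(\mu_A,\gamma) \geq \frac{\left(\EE_{\mu_A}[f] - \EE_\gamma[f]\right)^2}{\EE_\gamma[\norm{\nabla f}_2^2]}.
\end{equation*}
Because $f \geq t$ on $A$ one has $\EE_{\mu_A}[f] \geq t$, and under the tacit normalization $\EE_\gamma[f] \leq 0$ the numerator is at least $t^2$. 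Combining with the upper bound gives $t^2/\EE_\gamma[\norm{\nabla f}_2^2] \leq \frac{7}{4}\ln(1/\gamma(A))$, which rearranges to the desired inequality.

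The main obstacle is the Kantorovich step: the sharper denominator $\EE_\gamma[\norm{\nabla f}_2^2]$ (in place of $\sup\norm{\nabla f}^2_2$) is bought exactly by the one-sided tangent bound coming from concavity, and one must check that the supremum defining $(\lambda f)^c$ is indeed attained at $y - x = \lambda \nabla f(x)/2$ under mild regularity on $f$. The evenness hypothesis is then used only to ensure that the set $A$ is symmetric and $\mu_A$ is centered, so that Theorem \ref{thm: poincare stability} applies directly without any recentering.
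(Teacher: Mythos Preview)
Your argument is correct and arrives at the same constant $4/7$, but by a genuinely different route from the paper. The paper works with the exponential tilt $\nu_\lambda = e^{\lambda f}\gamma/Z$: it applies Theorem~\ref{thm: poincare stability} to $\nu_\lambda$ (again log-concave relative to $\gamma$, hence $\mathrm{C_p}\le 1$ by Brascamp--Lieb) to get $\mathcal{W}_2^2(\nu_\lambda,\gamma)\le\frac{7}{4}\mathrm{D}(\nu_\lambda||\gamma)$, then bounds $\EE_{\nu_\lambda}[f]-\EE_\gamma[f]$ via the optimal coupling and concavity, and finishes with the Herbst/Markov argument. You instead restrict to the super-level set and bound $\mathcal{W}_2^2(\mu_A,\gamma)$ from below directly via Kantorovich duality with test function $\lambda f$. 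Both proofs use the same two structural inputs---Brascamp--Lieb for a log-concave perturbation of $\gamma$, and the tangent inequality from concavity to trade the Lipschitz constant for $\EE_\gamma[\norm{\nabla f}^2]$---but the paper's detour through the moment generating function actually yields the stronger intermediate statement $\ln\EE_\gamma[e^{\lambda f}]\le\frac{7}{16}\lambda^2\EE_\gamma[\norm{\nabla f}^2]$, whereas your argument is more direct for the single tail. Two small comments: (i) knowing only $\lim_{x\to 1^-}g(x)=1/3$ is not quite enough, since $\mathrm{C_p}(\mu_A)$ may be strictly less than $1$; one needs $g\ge 1/3$ on all of $(0,1]$, which the paper records and which does hold; (ii) the ``obstacle'' you flag about attainment of the supremum in $(\lambda f)^c$ is not one---your tangent bound is a pointwise upper bound for every $y$, so taking the sup on the left needs no attainment.
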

Before proving the result we mention that our proof follows the one presented in \cite{samson2000concentration}. We use Theorem \ref{thm: poincare stability} to improve the constant obtained there. One should also compare the corollary to the main result of \cite{paouris2018gaussian} which used Ehrhard's inequality in order to show that $\EE\left[\norm{\nabla f(G)}_2^2\right]$ may be replaced by the smaller quantity $\mathrm{Var}(f(G))$, at the cost of a worse constant in the exponent.\\
The assumption that $f$ is even is used here for simplicity and could be relaxed.
\begin{proof}[Proof of Corollary \ref{cor: concntration}]
	For $\lambda >0 $, denote the measure $\nu_\lambda = \frac{e^{\lambda f}}{\EE_\gamma\left[e^{\lambda f}\right]}d\gamma$ and
	let $(X,Y)$ be a random vector in $\RR^{2d}$ which is a realization of the optimal coupling between $\nu_\lambda$ and $\gamma$. That is, 
	$X \sim \nu_{\lambda}, Y \sim \gamma$ and
	$$\mathcal{W}_2(\nu_\lambda,\gamma) = \sqrt{\EE\left[\norm{X-Y}_2^2\right]}.$$
	As $f$ is concave, we have by using Cauchy-Schwartz:
	\begin{align} \label{eq: coupling bound}
	\EE_{\nu_{\lambda}}\left[\lambda f\right] - \EE_\gamma\left[\lambda f\right] &\leq \EE\left[\left\langle\nabla \lambda f(Y),X-Y\right\rangle\right]\leq\sqrt{\lambda^2 \EE\left[\norm{\nabla f(Y)}_2^2\right]}\sqrt{\EE\left[\norm{X-Y}_2^2\right]} \nonumber\\
	&=\sqrt{ \lambda^2 \EE_\gamma\left[\norm{\nabla f}_2^2\right]}\mathcal{W}_2(\nu_\lambda,\gamma).
	\end{align}
	Since $f$ is concave, $\nu_\lambda$ has a log-concave density with respect to the standard Gaussian. For such measures, Brascamp-Lieb's inequality (\cite{brascamp1976extensions}) dictates that $\mathrm{C_p}(\nu_\lambda)\leq 1$. Note that
	$$\frac{(x+1)(2-2x+(x+1)\ln(x)}{(x-1)^3}\geq \frac{1}{3}, \text{ whenever } x \in[0,1].$$
	In this case, since $f$ is even and $\nu_{\lambda}$ is centered, Theorem \ref{thm: poincare stability} gives us, 
	$$\delta_{\mathrm{Tal}}(\nu_\lambda)\geq \frac{1}{4}\mathrm{D}\left(\nu_\lambda||\gamma\right),$$
	which is equivalent to 
	$$\mathcal{W}_2^2\left(\nu_\lambda,\gamma\right)\leq \frac{7}{4}\mathrm{D}(\nu_\lambda||\gamma).$$ 
	Combining this with \eqref{eq: coupling bound} and the assumption, $\EE_\gamma\left[\lambda f\right] =0 $, yields 
	$$\EE_{\nu_{\lambda}}\left[\lambda f\right]\leq \sqrt{\lambda^2\frac{7}{4}\EE_\gamma\left[\norm{\nabla f}_2^2\right]\mathrm{D}(\nu_\lambda||\gamma)}.$$
	For any $x,y \geq 0$ we have the inequality,
	$$\sqrt{xy} \leq \frac{x}{4} + y.$$
	Observe as well that 
	$$\mathrm{D}(\nu_\lambda||\gamma) =  \EE_{\nu_{\lambda}}\left[\lambda f\right]  -\ln\left(\EE_\gamma\left[e^{\lambda f}\right]\right).$$
	Thus,
	$$\ln\left(\EE_\gamma\left[e^{\lambda f}\right]\right)\leq \lambda^2\frac{7}{16}\EE_\gamma\left[\norm{\nabla f}_2^2\right].$$
	By Markov's inequality, for any $\lambda, t > 0$
	$$\PP\left(f(G) \geq t\right) = \PP\left(e^{\lambda f(G)} \geq e^{\lambda t}\right) \leq \EE_\gamma\left[e^{\lambda f }\right]e^{-\lambda t} \leq \exp\left(\lambda^2\frac{7}{16}\EE_\gamma\left[\norm{\nabla f}_2^2\right] -\lambda t\right).$$
	We now optimize over $\lambda$ to obtain,
	$$\PP\left(f(G) \geq t\right) \leq e^{-\frac{4t^2}{7\EE_\gamma\left[\norm{\nabla f}_2^2\right]}}.$$
\end{proof}
\bibliographystyle{plain}
\bibliography{bib}{}
\end{document}